\definecolor{myblue}{rgb}{0.0, 0.0, 1.0}
\definecolor{mygreen}{rgb}{0.01,0.75,0.20}
\newtheorem{theorem}{Theorem}[section]
\newtheorem{corollary}[theorem]{Corollary}
\newtheorem{lem}[theorem]{Lemma}
\newtheorem{lemma}[theorem]{Lemma}
\newtheorem{proposition}[theorem]{Proposition}
\theoremstyle{definition}
\newtheorem{definition}[theorem]{Definition}
\newtheorem*{remark}{Remark}
\newtheorem*{rem}{Remark}
\newtheorem{example}[theorem]{Example}
\theoremstyle{definition}
\numberwithin{equation}{section}
\def\ga{\alpha}
            \def\gl{\lambda}
    \def\gr{\rho}
     \def\Gd{\Delta}
\newcommand{\eat}[1]{}
\DeclarePairedDelimiter\norm{\lVert}{\rVert}%
\let\oldnorm\norm
\def\norm{\@ifstar{\oldnorm}{\oldnorm*}}
\newcommand{\Om} {\Omega}
\newcommand{\ra} {\rightarrow}
\newcommand\restr[2]{{% we make the whole thing an ordinary symbol
  \left.\kern-\nulldelimiterspace % automatically resize the bar with \right
  #1 % the function
 % \vphantom{\big|} % pretend it's a little taller at normal size
  \right|_{#2} % this is the delimiter
  }}
\def\w{{\widetilde w}}
\def\w2{{W^{1,2}_0(\Om)}}
\def\hh2{{H^1_0(\Om)}}
\def\C{{\mathcal C}}
\def\D{{\mathcal D}}
\def\E{{\mathcal E}}
\def\N{{\mathbb N}}
\def\F{{\mathcal F}}
\def\cp{{\rm Cap}_{\sol{u}}}
\def\R{{\mathbb R}}
\def\({{\Big(}}
\def\){{\Big)}}
\def\ws2{{\F_{\frac{N}{2}}}}
\def\c1{{\C_c^1}}
\def\H{{\mathcal{H}}}
\newcommand\sol[1]{{{#1}}}
\newcommand\Q{Q}
\newcommand\QQ{Q'}
\renewcommand{\L}{\Delta_{p}}
\newcommand{\ph}{\varphi}
\renewcommand{\H}{\mathcal{H}}
\renewcommand{\S}[1]{S_{{\sol{u}}}(#1)}
\renewcommand{\E}{E_{{\sol{u}}}}
\newcommand{\Hmm}[1]{\leavevmode{\marginpar{\tiny%
			$\hbox to 0mm{\hspace*{-0.5mm}$\leftarrow$\hss}%
			\vcenter{\vrule depth 0.1mm height 0.1mm width \the\marginparwidth}%
			\hbox to
			0mm{\hss$\rightarrow$\hspace*{-0.5mm}}$\\\relax\raggedright #1}}}
\begin{document}
	\title[The space of Hardy-weights]{The space of Hardy-weights for quasilinear operators on discrete graphs}
	
	%\iffalse
	\author {Ujjal Das}
	
	\address {Ujjal Das, Department of Mathematics, Technion - Israel Institute of
		Technology,   Haifa, Israel}
	
	\email {ujjaldas@campus.technion.ac.il, getujjaldas@gmail.com}
\author[M.~Keller]{Matthias Keller}
\address{M.~Keller,  Institut f\"ur Mathematik, Universit\"at Potsdam,
	14476  Potsdam, Germany}
\email{matthias.keller@uni-potsdam.de}	

	\author{Yehuda Pinchover}
	\address{Yehuda Pinchover,
		Department of Mathematics, Technion - Israel Institute of
		Technology,   Haifa, Israel}
	\email{pincho@technion.ac.il}
	
	%\fi
	%%%%%%%%%%%%%%	
	\begin{abstract}
We study Hardy inequalities for $ p $-Schr\"odinger operators on general weighted graphs. Specifically, we prove a Maz'ya-type result, where we characterize the space of Hardy weights for $ p $-Schr\"odinger operators via a generalized capacity. The novel ingredient in the proof is the demonstration that the simplified energy of the $ p $-Schr\"odinger energy functional is compatible with certain normal contractions. As a consequence, we obtain a necessary integrability criterion for Hardy weights. Finally, using some tools of criticality theory, we investigate the existence of minimizers in the Hardy inequalities and discuss relations to Cheeger type estimates.

		\medskip
		
		\noindent  2020  \! {\em Mathematics  Subject  Classification.}
		Primary  \! 39A12; Secondary 35R02,  49J40, 31C20, 35J62.\\[1mm]
		\noindent {\em Keywords:} Hardy inequality, discrete quasilinear elliptic equation, criticality theory, weighted graphs, positive solutions, Cheeger estimate.
	\end{abstract}

\maketitle

\section{Introduction}
In this article we study quasilinear Schr\"odinger type operators on discrete graphs. Specifically, we are interested in the space of Hardy weights for these operator. While Hardy formulated his original inequality in the discrete setting of the natural numbers, it has been mainly studied in the continuum setting. There Hardy inequalities are a major tool in partial differential equations, operator theory and mathematical physics. 

In the last decade a strong interest in Hardy inequalities  on graphs arose. 
There is a mulitude of work on Hardy-type inequalities for one-dimensional operators  \cite{FKP_N,GKS, Gupta1,Gupta2,HuangYe,KPP_N,KS,Kostenko,Lefevre} including the fractional Laplacian \cite{CiaurriRoncal,KellerNietschmann} and on the Euclidean lattice   \cite{Gupta3,Gupta4,KapitanskiLaptev,KellerLemm}. Furthermore, it has been investigated on trees \cite{BSV,EFK,Golenia}. For general graphs {\it{optimal Hardy weights}} have been studied in 
\cite{KPP_optimal} for the linear case $ p=2 $ with  applications in \cite{KPP_rellich,KP_Agmon}, for further considerations  see \cite{BGK,FKMN,MurmannSchmidt}. For general $ p $, the investigations on optimal Hardy weights  have been extended in \cite{Florian_optimal}, for recent developments on more general settings beyond graphs see  \cite{CaoGrigoryanLiu,FrankSeiringer,Schmidt, Takeda1,Takeda2}.

Here, we consider a quasilinear Schr\"odinger type operator of the form \eqref{e:Q'}, which is the sum of a weighted $ p $-Laplacian and a possibly sign-changing potential. Throughout the present article we refer to this operator as  a $p$-Schr\"odinger operator.

Our first aim is to characterize the space of Hardy weights for $p$-Schr\"odinger operators on discrete graphs. In the continuum case, the problem of characterizing the Hardy weights goes back to the works of Talenti \cite{Talenti}, Tomaselli \cite{Tomaselli}, Muckenhoupt \cite{Muckenhoupt}, where the one dimensional case is studied; see also the monograph  \cite{OK} by Opic and Kufner for a detailed study on the ODE case. In higher dimension, there is a characterization for radial Hardy weights using the Bessel pairs \cite{LamLu}. By means of  the $ p $-capacity, Maz'ya \cite{Mazya2} provided a necessary and sufficient condition for the Hardy weights of $ p $-Laplacian on Euclidean domains. As a particular case, this characterization identifies the admissible domains for the Poincar\'e inequality, and also, for the Hardy inequality with distance function \cite{Kinnunen}.  Maz'ya's characterization also helps us to study the {\it{quasi-additivity property of the $p$-capacity}} \cite{Lehrback}.

 Using a generalized $ p $-capacity, Maz'ya's characterization  was recetly extended in \cite{Das_Pinchover1} to quasilinear Schr\"odinger operators in the continuum, where the novelty  was to overcome the difficulty due to the sign-changing behaviour of the potential.  This was realized by the so-called {\it{simplified energy functional}} \cite{ptt}. This study has been further extended to the Finsler settings in the continuum \cite{Hou}.  Of course, the major challenge in studying graphs comes from the non-local behavior of the discrete $ p $-Laplacian as it also occurs for the fractional $ p $-Laplacian in the continuum case. The Maz'ya type characterization for the latter case was achieved in \cite{Dyda}. Furthermore, a simplified energy for discrete $ p $-Schr\"odinger operators was recently developed by Fischer \cite{Florian_nonlocal}. We combine these ideas to obtain a characterization of Hardy weights for discrete $ p $-Schr\"odinger operators  (Theorem~\ref{Thm:Char}) on arbitrary graphs which may not be locally finite. A crucial novel  ingredient is to show that the simplified energy is compatible with suitable normal contractions which might be applicable also in other situations, see Lemma~\ref{lem:contr}, Lemma~\ref{l:cuttoff} and the proof of Lemma~\ref{Lem:ujjalp>2}. As we allow the graph to be non-locally finite, the characterization of Hardy weights given in Theorem~\ref{Thm:Char} is also valid for discrete fractional $ p $-Schr\"odinger operators (see Example \ref{Exam:frac}). Based on our characterization, we provide a simple necessary integrability condition on the space of Hardy weights in terms of a positive solution of minimal growth at infinity in Theorem~\ref{Thm:KP}. Such a result is found in the continuum setting in \cite{KP} and \cite{Das_Pinchover1}.

Our second aim is to provide a sufficient condition for the existence of a minimizer for a given Hardy weight with the best constant. This is achieved  under a spectral gap condition, Theorem~\ref{Thm:best_constant}. In the contiuum such considerations go back to \cite{MMP}. It is notable that such question of the existence of minimiser in the contiuum case is mainly studied using the concentration compactness \cite{DA,Smets,Tertikas}. On the other hand, in \cite{MMP,Lamberti_Pinchover} and recently in \cite{Das_Pinchover1,DDP, Hou}, it was shown that such problems can be attacked using criticality theory. Our proof of Theorem~\ref{Thm:best_constant} relies on criticality theory for discrete graphs. Finally, based on the observations that the best Hardy constant can be interpreted as a spectral quantity, we estimate  in Section~\ref{sec_Cheeger} the Hardy  constant by suitable Cheeger constants.

The article is structured as follows. In Section~\ref{sec-prelim} we introduce the relevant notions such as the $p$ Schr\"odinger operators together with their energy functional, the simplified energy functional, and the generalized $p$-capacity.  In Section~\ref{sec_Mazya} we prove the characterization of Hardy weights in terms of the generalized $p$-capacity, and in Section~\ref{KP-condition} we provide for Hardy weights a necessary integrability criterion. In Section~\ref{sec_existence} we study the existence of minimizers and in Section~\ref{sec_Cheeger} we consider the relation to Cheeger constants.

\eat{
{\bf{Hints for Introduction:}} In this article, we extend the results of \cite[Theorem 1.2, Theorem 5.1]{Das_Pinchover1} to discrete graph. \cite[Theorem 1.2]{Das_Pinchover1} provides a characterization of Hardy weights for the quasilinear operator $\QQ$ (in the continuum case), which is extended to the discrete graph in this article (Theorem \ref{Thm:Char}). The main difficulty in proving \cite[Theorem 1.2]{Das_Pinchover1} arises due to the sign-changing behaviour of the potential  $V$. Extending \cite[Theorem 1.2]{Das_Pinchover1} to the the discrete graph faces an extra difficulty because of the non-local behaviour of discrete $p$-Laplacian.     We have overcome the former difficulty by using the $Q_{p,A,V}$-capacity and the simplified energy functional (as done in the proof of \cite[Theorem 1.2]{Das_Pinchover1}), and to overcome the the extra difficulty we borrow the ideas from the proof of \cite[Proposition 5]{Dyda}. 

For $c=0$, Maz'ya \cite[Theorem 8.5]{Mazya2} gave an intrinsic characterization of a Hardy-weight (for the $p$-Laplacian) using the notion of $p$-capacity. One of the application of this characterization is that this eventually characterizes the domains for which the following geometric Hardy inequality holds:
$$\int_{\Om} \frac{|\varphi|^p}{\delta_{\Om}^p} \leq C \int_{\Om} |\nabla \varphi|^p \qquad \forall \varphi \in C_c^{\infty}(\Om) \,.$$
Analogous characterization of a Hardy-weight has been extended for the fractional $p$-Laplacian in \cite[Proposition 5]{Dyda}. One of the main difficulty to get Maz'ya-type characterization for the  fractional $p$-Laplacian occurs due to the nonlocal behaviour of the fractional $p$-Laplacian. }

\section{Preliminaries}\label{sec-prelim}

Let $ p\in (1,\infty) $ and  $ X $ be an infinite countable set equipped with the discrete topology. 
 Denote $C(X)={\{f:X \rightarrow \R \}}$ and
$C_c(X)=\{f\in C(X) \mid {\rm{supp}}(f) \Subset X\}$, where  $ K\Subset X$ means that $ K $ is a compact, i.e. finite, subset of $ X $.
For a positive or an absolutely summable function $ f $ on a discrete set $ A $, we write 
\begin{align*}
	\sum_{A}f:=\sum_{a\in A} f(a)
\end{align*}
as it is commonly used for integrals. Typically, the set $ A $ will be a subset of either $ X $ or $ X\times X $.  A  function $m:X\to [0,\infty)  $ extends to a measure on $ X $ by letting $$  m(A)=\sum_{A}m,\qquad A\subseteq X  .$$ 
Furthermore, for $ q\in[1,\infty) $, we denote
\begin{align*}
	\ell^{q}(X,m)=\{ f:X\to \R\mid  \|f\|_{q}:=\big(\sum_{X}m|f|^q\big)^{1/q} <\infty \},
\end{align*}
and  $ \ell^{\infty}(X)= \{f:X\to \R\mid  \|f\|_{\infty}:=\sup_{X}|f|<\infty\}$.

In what follows we assume that $ m $ is a \emph{measure of full support}, i.e. the {density} function $ m $ is strictly positive.  However, we will still consider $ \ell^q $ spaces for measures which may not have full support in what follows, e.g. by multiplying $ m $ with a density which may vanish somewhere.

With slight abuse of notation, for a function $ w:X\to\R $, we denote the functional
\begin{align*}
	w(\ph):=\sum_{X}mw |\ph|^{p}
\end{align*}
 on $ C_{c}(X) $ also by $ w $.

\subsection{Graphs, energy functionals and $p$-Schrödinger operators}

Let $b$ be a connected graph over $(X,m)$, i.e. $ b:X\times X\to [0,\infty) $ is symmetric, has zero diagonal,  satisfies
\begin{align*}
	\sum_{y\in X}b(x,y)<\infty\qquad x\in X,
\end{align*} 
and for every $ x,y\in X $ there are $ x =x_{0}\sim\ldots \sim x_{n}=y $, where we write $ u\sim v $ whenever $ b(u,v)>0 $ and call $ u $ and $ v $ \emph{adjacent}. A graph $ b $ is called \emph{locally finite} if $ \#\{y\in X\mid y\sim x\} <\infty$ for all $ x \in X$. Although we do not need this assumption for the main results of this paper, it is worthwhile from time to time to look at the special case of locally finite graphs.

Let $ c: X\to \R $ be a given {\em potential}. We introduce the energy functional $ \Q=\Q_{p,b,c} $ on $ C_{c}(X) $ {given by}
$$\Q(\varphi)=\frac{1}{2}\displaystyle \sum_{x,y \in X}  b(x,y)|\nabla_{xy}\varphi|^p + \sum_{x \in X} c(x)|\varphi(x)|^p,  \qquad \varphi \in  C_c(X), $$
where,  for $ x,y\in X $ and $ f\in C(X) $,
$$ \nabla_{xy}f:=f(x)-f(y). $$  We assume throughout the paper that 
\begin{align*}
	\Q\ge 0, 
\end{align*} 
which means $ \Q (\ph)\ge 0$ for all $ \ph\in C_{c}(X) $. A crucial feature of $ \Q $ is that it is compatible with the absolute value, i.e for $ \ph\in C_{c}(X) $ 
\begin{align*}
	\Q(|\ph|)\leq \Q(\ph).
\end{align*}
From the energy functional $ \Q $, the
{\em weighted $p$-Laplacian} $ \L =\Delta_{p,b,c,m}  $ arises. It is acting on the following space of functions
\begin{align*}
	\mathcal{F}(X):=\{ f\in C(X)\mid \sum_{y \in X} b(x,y) |\nabla_{xy} {f}|^{p-1}<\infty\mbox{ for all }x\in X  \}
\end{align*} 
as
$$ \L f(x):=\frac{1}{m(x)}\sum_{y \in X} b(x,y) |\nabla_{xy} f|^{p-2}(\nabla_{xy} f). $$ 
Clearly, in the case of locally finite graphs one has $ \mathcal{F}(X)=C(X) $. For a potential $ g:X\to \mathbb{R} $, we denote for $ \varphi\in C(X) $ and $ x\in X $
\begin{align*}
	g[\varphi](x):=g(x)|\varphi(x)|^{p-2}\varphi(x).
\end{align*}

\begin{rem}
	The  counterpart of $\L$ in 
	the continuum case is referred to as the {\em pseudo $p$-Laplacian} which acts on sufficiently smooth real valued functions $ \varphi $ defined on an open subset of the Euclidean space by  $\varphi \mapsto -\sum_{i=1}^n\partial_i(|\partial_i\varphi|^{p-2}\partial_i\varphi) $
	and which is sometimes denoted in the literature by $ \tilde\Gd_p $, see \cite{BK} and references therein. 
\end{rem}

We consider the  quasilinear homogeneous equation for the operator $ \QQ=\L+c/m$
\begin{align*}\label{e:Q'} \tag{$ \QQ $}
	\QQ[\varphi]:=\L \varphi + \frac{c}{m}|\varphi|^{p-2}\varphi=0
\end{align*}
and call $ u\in \mathcal{F} (X)$ a \emph{(super-)solution} of \eqref{e:Q'} on $ Y\subseteq X $ if $ \QQ[u]=0 $ (respectively, $ \QQ[u]\ge 0 $) on $ Y $. We recall {the} \emph{Agmon-Allegretto-Piepenbrink}-type theorem \cite[Theorem~2.3]{Florian_cri}, which states that  $\Q\geq 0$ on $C_c(X)$ if and only if  \eqref{e:Q'} admits a positive supersolution on $X$.  In the case of {locally} finite graphs,  $\Q\geq 0$ on $C_c(X)$  even implies the existence of a positive solution on $X$. By the Harnack inequality \cite[Lemma~4.4]{Florian_nonlocal} it can be seen that due to the connectedness, every positive supersolution is indeed strictly positive.
\medskip

\begin{rem}[Restriction to subsets]  Let us discuss the case of solutions on subsets  $ Y\subseteq X $. If $ u \in \mathcal{F}(X) $ satisfies $ u=0 $ on $ X\setminus Y$ and $ \QQ[u]=f $ on $ Y $ for some $ f:X\to\R $, then one gets by a direct computation that the restriction $ u_{Y}:=u\vert_{Y} $ satisfies $ \QQ_{Y}[u_Y] =f_{Y}$. Here $ \QQ_{Y} $ is the operator which arises from the restriction $ b_{Y} $ of the graph $ b $ to $ Y\times Y $ and the potential
	\begin{align*}
		c_{Y}(y):=c(y)
		+\sum_{x\in X\setminus Y}b(x,y),\qquad y\in Y.
	\end{align*}
Therefore, this problem of solutions on subsets is included within our setting.
However, in contrast to the linear case $ p=2 $ such a reduction is not possible for $ u $ being equal to some nontrivial function $ g $ outside of $ Y $. For $ p=2 $, the equation $ \QQ[u] =f$ on $ Y $ then reduces to   $ \QQ_{Y}[u_Y] =f_{Y}+g'_{Y}$ where $ g'_{Y}(x):=\sum_{y\in X\setminus Y}b(x,y)g(y) $. This is no longer the case for $ p\neq 2 $.
\end{rem}

Let us emphasize that we do not assume that our graphs are locally finite. This allows us to include our main results of this article for {\em fractional Laplacians} and the corresponding Schr\"odinger operators on graphs .
\begin{example}[Fractional $ p $-Laplacian] \label{Exam:frac} In the case $ p=2 $, Hardy inequalities for the discrete fractional Laplacian 
\begin{align*}
	\Delta_{2}^{\sigma}f(x):=\frac{1}{|\Gamma(-\sigma)|}\int_{0}^{\infty}(I-e^{-t\Delta_{2}})f(x)\frac{dt}{t^{1+\sigma}}
\end{align*}	
	 for $ \sigma \in (0,1) $ are studied in \cite{CiaurriRoncal,KellerNietschmann}. For the Laplacian $ \Delta_{2}=\Delta_{2,b,0,m}  $ on a connected graph $ b $ over $ (X,m) $, the fractional Laplacian is again a graph Laplacian $  \Delta^{\sigma}_{2}  =\Delta_{2,b_{\sigma},0,m} $ for a  weighted graph $ b_{\sigma} $.  Since the semigroup $ e^{-t\Delta_{2}} $ maps positive functions to strictly positive functions on connected graphs, $ b_{\sigma} $ will be complete, i.e. every two vertices are adjacent. Hence, whenever the original graph is infinite, then the graph $ b_{\sigma} $ will be non-locally finite. One can proceed to study the $ p $-fractional Laplacian which is given for suitable functions $ f $  as
	\begin{align*}
	\Delta_{p}^{\sigma}f(x)&:=C\int_{0}^{\infty}e^{-t\Delta_{2}}\left( |\nabla_{x(\cdot)} f|^{p-2}(\nabla_{x(\cdot)} f)\right)\frac{dt}{t^{1+\frac{\sigma p}{2}}}=C \sum_{y\in X}b_{\sigma,p}(x,y) |\nabla_{xy} f|^{p-2}(\nabla_{xy} f)	\end{align*}
with \begin{align*}
b_{\sigma,p}(x,y):= \int_{0}^{\infty}e^{-t\Delta_{2}}1_0(x-y)\frac{dt}{t^{1+\frac{\sigma p}{2}}}
\end{align*}
and some suitable choice of the constant $C  $, cf. \cite{DelTeso}, which gives again rise to a $ p $-Laplacian for a non-locally finite graph. Since our main results are valid for non-locally finite graphs, these operators are included in the set-up of our paper.
\end{example}

We close this subsection with a few words about notation. With {a} slight abuse of notation we identify constants with the corresponding constant function at times which is mainly applied to the constants $ 0 $ and $ 1 $. For example we write  $ f\ge 0 $ to indicate that a function is pointwise larger or equal than $ 0 $. In the case when $ f $ is additionally not trivial, then we say $ f $ is \emph{positive}. Moreover, if $ f>0 $, then we say $ f $ is \emph{strictly positive}.  We denote the characteristic function of a set $ A $ by $ 1_{A} $ and also write $ 1=1_{X} $. For real valued functions $ f $ and $g$, we write $ f\wedge g=\min\{f,g\} $ and $ f\vee g=\max\{f,g\} $,  and we denote the positive and negative part of $f$ by $ f_{\pm}=({\pm}f)\vee0 $.  
Finally, in this text, $ C $ denotes a constant which depends only on $ p $ and may change from line to line.
%
%%%%%%%%%%%
\subsection{The simplified energy functional}

We introduce the simplified energy which serves as a remedy for the absence of the ground state transform in the nonlinear case.

\begin{definition}[Simplified energy] \label{Def:SEF}
	 Let ${\sol{u}} \in  C(X)$ be a positive function.  For $ \varphi\in C_{c}(X) $, let 
			$$ \S{\varphi}_{xy} := b(x,y) {\sol{u}}(x) {\sol{u}}(y) |\nabla_{xy} \varphi|^2 \left[ |\nabla_{xy} {\sol{u}}|\frac{|\varphi(x)|+|\varphi(y)|}{2} + ({\sol{u}}(x) {\sol{u}}(y))^{1/2} |\nabla_{xy} \varphi|\right]^{p-2} $$
			 where we set $0 \cdot \infty =0$ if $1 < p < 2$,
		and define the {\em simplified energy functional}  
		of the functional $\Q$ with respect to  $\sol{u}$  as
		$$\E(\varphi):= \sum_{x,y\in X} \S{\varphi}_{xy} . $$ 
\end{definition}
For functions $ f ,g$ and vertices $ x,y $, we introduce the notation
\begin{align*}
	(f\otimes g)_{xy}=f(x)g(y)\qquad \mbox{and}\qquad \langle f\rangle_{xy}=\frac{f(x)+f(y)}{2}.
\end{align*}
With these conventions the terms $ \S{\ph}:X\times X\to [0,\infty) $ can be expressed more concisely as 
\begin{align*}
	\S{\ph}= b ({\sol{u}}\otimes {\sol{u}})^{p/2} |\nabla \varphi|^2 \left[ \frac{|\nabla {\sol{u}}|}{({\sol{u}}\otimes {\sol{u}})^{1/2}} \langle{|\varphi|}\rangle+  |\nabla \varphi|\right]^{p-2} \,.
\end{align*}

%-----------------------------------------------------------------------------
%
%Alternatively one could also write
%\begin{align*}
%	b_{\sol{u}}(x,y)=b(x,y)(\sol{u}(x)\sol{u}(y))^{p/2},\qquad \gamma(\sol{u})_{xy}=\frac{|\nabla_{xy}\sol{u}|}{(\sol{u}(x)\sol{u}(y))^{1/2}}
%\end{align*} 
%to write
%\begin{align*}
%	\S{\ph}= b_{\sol{u}} |\nabla \varphi|^2 \left(\gamma(\sol{u}) \langle{|\varphi|}\rangle + |\nabla \varphi|\right)^{p-2} 
%\end{align*}
%in which case one does not need $ f \otimes g$
%
%
%-----------------------------------------------------------------------

In contrast to the ground state transform in the linear case,  the simplified energy is not a representation of  the energy functional $ \Q $. However, we still have a two sided estimate which goes back to {Pinchover/Tertikas/Tintarev \cite{ptt}} in the continuum setting and to Fischer \cite{Florian_nonlocal} in the discrete setting.
%%%
\begin{proposition}[Simplified energy, {\cite[Theorem 3.1]{Florian_nonlocal}}] \label{Prop:simp_energy} There are $C_1,C_2>0$ such that  for all strictly positive functions  $ \sol{u} \in \mathcal{F}(X)$  and for all  $ \varphi  \in  C_c(X) $
	$$C_1 \Q({\sol{ u}}\varphi) \leq  \E(\varphi)+ \sum_{X}mu\QQ[u]||\varphi|^{p} \leq C_2 \Q({\sol{u}}\varphi) .$$
\end{proposition}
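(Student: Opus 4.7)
The plan is to imitate the classical $p=2$ ground-state transform and extend it to general $p>1$ via edgewise algebraic inequalities. For $p=2$ one has the exact pointwise identity
$$|u(x)\varphi(x)-u(y)\varphi(y)|^{2}=u(x)u(y)|\nabla_{xy}\varphi|^{2}+(\nabla_{xy}u)\,\nabla_{xy}(u\varphi^{2}),$$
which, after summation, symmetrization and invoking the definition of $\QQ[u]$, gives the exact identity $\Q(u\varphi)=\E(\varphi)+\sum_{X}m u\QQ[u]\varphi^{2}$. For general $p$, the aim is to produce a pointwise two-sided surrogate of this identity. Since the right-hand side of the claim depends on $\varphi$ only through $|\varphi|$, and since $\Q(u|\varphi|)\le\Q(u\varphi)$ by the absolute-value compatibility of $\Q$ applied to $u\varphi$, I would first treat $\varphi\ge 0$ and transfer the resulting inequalities afterwards.

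The key edgewise decomposition is the discrete Leibniz rule
$$\nabla_{xy}(u\varphi)=\langle u\rangle_{xy}\,\nabla_{xy}\varphi+\langle\varphi\rangle_{xy}\,\nabla_{xy}u.$$
The bracket $N_{xy}:=|\nabla_{xy}u|\langle\varphi\rangle_{xy}+\sqrt{u(x)u(y)}\,|\nabla_{xy}\varphi|$ in the definition of $\S{\varphi}_{xy}$ is tailored precisely so that
$$b(x,y)|\nabla_{xy}(u\varphi)|^{p}\;\asymp\;\S{\varphi}_{xy}+b(x,y)\langle\varphi\rangle_{xy}^{2}|\nabla_{xy}u|^{2}N_{xy}^{p-2}$$
holds pointwise on edges, with constants depending only on $p$. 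I would verify this by splitting each edge according to whether $\sqrt{u(x)u(y)}|\nabla_{xy}\varphi|$ or $\langle\varphi\rangle_{xy}|\nabla_{xy}u|$ dominates $N_{xy}$, and reducing to the one-variable estimate $|1+t|^{p}\asymp 1+|t|^{p}$ in the appropriate regime. The subquadratic range $1<p<2$ is delicate because $N_{xy}^{p-2}$ is singular when $N_{xy}=0$; but this occurs only when $\nabla_{xy}u=\nabla_{xy}\varphi=0$, in which case $\nabla_{xy}(u\varphi)=0$ and the convention $0\cdot\infty=0$ keeps both sides consistent.

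Summing the edgewise comparison and adding the common potential contribution $\sum_{X}c(u|\varphi|)^{p}$ then yields $\Q(u\varphi)\asymp\E(\varphi)+R(\varphi)$ with constants depending only on $p$, where
$$R(\varphi):=\tfrac{1}{2}\sum_{x,y}b(x,y)\langle\varphi\rangle_{xy}^{2}|\nabla_{xy}u|^{2}N_{xy}^{p-2}+\sum_{X}c(u|\varphi|)^{p}.$$
The remaining and hardest step is to show $R(\varphi)\asymp\sum_{X}m u\QQ[u]|\varphi|^{p}$. By symmetrization in $(x,y)$ of the definition of $\QQ[u]$ one has the exact identity
$$\sum_{X}m u\,\QQ[u]\,|\varphi|^{p}=\tfrac{1}{2}\sum_{x,y}b(x,y)|\nabla_{xy}u|^{p-2}(\nabla_{xy}u)\,\nabla_{xy}(u|\varphi|^{p})+\sum_{X}c(u|\varphi|)^{p},$$
and one then expands $\nabla_{xy}(u|\varphi|^{p})=\langle u\rangle\nabla_{xy}|\varphi|^{p}+\langle|\varphi|^{p}\rangle\nabla_{xy}u$, estimates $\bigl|\nabla_{xy}|\varphi|^{p}\bigr|\lesssim\langle\varphi\rangle_{xy}^{p-1}|\nabla_{xy}\varphi|$ via the mean value theorem, and applies Young's inequality to match the right-hand side with $R(\varphi)$ modulo a piece absorbable into $\E(\varphi)$ (and hence into the left-hand side of the main comparison). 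The main obstacle is to execute this Young step with constants depending only on $p$ and uniformly across the ranges $p\ge 2$ and $1<p<2$, where the singular bracket $N_{xy}^{p-2}$ requires an additional careful splitting of edges.
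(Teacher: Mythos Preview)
The paper does not prove this proposition; it cites \cite[Theorem~3.1]{Florian_nonlocal}. So your sketch must be judged on its own merits, and it has a genuine gap.

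The edgewise comparison you assert in step~3,
\[
b(x,y)|\nabla_{xy}(u\varphi)|^{p}\;\asymp\;\S{\varphi}_{xy}+b(x,y)\langle\varphi\rangle_{xy}^{2}|\nabla_{xy}u|^{2}N_{xy}^{p-2},
\]
is false. By the Leibniz rule $\nabla_{xy}(u\varphi)=\langle u\rangle_{xy}\nabla_{xy}\varphi+\langle\varphi\rangle_{xy}\nabla_{xy}u$, the two summands can cancel completely while $N_{xy}>0$. Take $u(x)=1$, $u(y)=2$, $\varphi(x)=2$, $\varphi(y)=1$: then $\nabla_{xy}(u\varphi)=0$ but $\nabla_{xy}u$ and $\nabla_{xy}\varphi$ are both nonzero, so the right-hand side is strictly positive. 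Your proposed verification via the scalar estimate $|1+t|^{p}\asymp 1+|t|^{p}$ fails for exactly the same reason: the estimate breaks down at $t=-1$. Consequently, step~4 also fails: even ignoring step~3, adding the signed quantity $\sum_{X}c(u|\varphi|)^{p}$ to both sides of a two-sided estimate does not preserve it, so you cannot conclude $\Q(u\varphi)\asymp\E(\varphi)+R(\varphi)$.

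The correct organization is the reverse of yours. Start from the symmetrization identity you wrote in step~5, which gives the exact formula
\[
\Q(u\varphi)-\sum_{X}mu\QQ[u]|\varphi|^{p}=\tfrac{1}{2}\sum_{x,y}b(x,y)\Bigl(|\nabla_{xy}(u\varphi)|^{p}-|\nabla_{xy}u|^{p-2}(\nabla_{xy}u)\,\nabla_{xy}(u|\varphi|^{p})\Bigr),
\]
in which the potential $c$ has cancelled exactly. One then proves a pointwise two-sided edge estimate for the bracket on the right against $\S{\varphi}_{xy}$. The subtracted term $|\nabla_{xy}u|^{p-2}(\nabla_{xy}u)\nabla_{xy}(u|\varphi|^{p})$ is precisely what removes the cancellation obstruction in your step~3; without it the edgewise comparison cannot hold. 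This is the route taken in \cite{Florian_nonlocal} (and in the continuum prototype \cite{ptt}).
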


Generally speaking, in the nonlinear case,  the nonnegative energy $Q$ is not convex, and might contain negative terms. On the other hand, the simplified energy contains only nonnegative terms yet  in general it is not a convex functional. Still, in certain situation (e.g. proof of Theorem \ref{Thm:Char}), it turns out that the simplified energy plays an efficient role due to its compatibility with certain normal contractions as it is shown in the following lemma. 

A \emph{normal contraction} is a function $ \mathcal{C}:\R\to \R $ which satisfies $ \mathcal{C}(0) =0 $ and for $ s,t\in \R $
\begin{align*}
	|\mathcal{C}(s)-\mathcal{C}(t)|\leq |s-t|.
\end{align*}
{If $\mathcal{C}$ is a normal contraction and $f:X\to \R$, we say that $\mathcal{C}\circ f$ is a {\em normal contraction of $f$}.}

\begin{lemma}\label{lem:contr} Let $ \sol{u} $ be a strictly positive function. Then, for all $ \varphi\in C_{c}(X) $, we have
	\begin{align*}
		\S{|\ph|}\leq \S{\ph}
	\end{align*}
	and for all $ \alpha,\beta\ge0 $ 
	\begin{align*}
		\S{(-\alpha) \vee \ph \wedge \beta }\leq 	\S{\ph }.
	\end{align*}
Moreover, if $ p\ge 2 $, then 	we have $$  \S{\mathcal{C}\circ\ph}\leq \S{\ph}  $$
for all normal contraction $  \mathcal{C}$ and $ \ph\in C_{c}(X) $.
\end{lemma}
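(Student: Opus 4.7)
The plan is to establish each inequality pointwise on each bond $(x,y)$, by reducing to the monotonicity of one-variable elementary functions. Setting $s=|\nabla_{xy}\ph|$, $r=\langle|\ph|\rangle_{xy}$, and $K=|\nabla_{xy}\sol{u}|/\sqrt{\sol{u}(x)\sol{u}(y)}\geq 0$, one has
\[
\S{\ph}_{xy}=b(x,y)(\sol{u}(x)\sol{u}(y))^{p/2}\,h(s,r),\qquad h(s,r):=s^{2}(Kr+s)^{p-2}.
\]
A direct calculation gives $\partial_s h=s(Kr+s)^{p-3}[2Kr+ps]\geq 0$ for every $p>1$, while $\partial_r h=(p-2)Ks^{2}(Kr+s)^{p-3}$ has the sign of $p-2$. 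So $h$ is always nondecreasing in $s$, while it is nondecreasing in $r$ precisely when $p\geq 2$.

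The first and third claims follow immediately. For $\S{|\ph|}\leq\S{\ph}$, one has $|\nabla_{xy}|\ph||\leq|\nabla_{xy}\ph|$ (reverse triangle inequality) and $\langle||\ph||\rangle_{xy}=\langle|\ph|\rangle_{xy}$, so only $s$ decreases; monotonicity of $h$ in $s$ suffices. For a normal contraction $\mathcal{C}$, the Lipschitz bound yields $|\nabla_{xy}(\mathcal{C}\circ\ph)|\leq|\nabla_{xy}\ph|$ and, via $\mathcal{C}(0)=0$, $|\mathcal{C}\circ\ph|\leq|\ph|$ pointwise, so both $s$ and $r$ decrease; the joint monotonicity of $h$ (valid precisely when $p\geq 2$) finishes the argument.

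The truncation claim is the delicate point, since in the regime $1<p<2$ the function $h$ is actually \emph{decreasing} in $r$ and the naive joint-monotonicity argument fails. I would reduce via composition to the two one-sided contractions $\ph\mapsto\ph\wedge\beta$ and $\ph\mapsto\ph\vee(-\alpha)$ and set $\psi:=\ph\wedge\beta$. The only nontrivial configuration on a bond $(x,y)$ is, up to swapping $x\leftrightarrow y$, $\ph(x)>\beta\geq\ph(y)$, with $\psi(x)=\beta$, $\psi(y)=\ph(y)$. The crucial algebraic observation is the identity
\[
D\;:=\;|\nabla_{xy}\ph|-|\ph(x)|-|\ph(y)|\;=\;|\nabla_{xy}\psi|-|\psi(x)|-|\psi(y)|,
\]
which I would verify by splitting on the sign of $\ph(y)$, together with $D\leq 0$ by the triangle inequality. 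Substituting $2\langle|\ph|\rangle_{xy}=|\nabla_{xy}\ph|-D$ rewrites the bracket $Kr+s$ as $(K/2+1)s+E$ with $E:=-KD/2\geq 0$, and the analogous identity with the \emph{same} constant $E$ holds for $\psi$. Consequently the per-bond simplified energy becomes a single nondecreasing function $\tilde h(t):=t^{2}[(K/2+1)t+E]^{p-2}$ (again $\tilde h'\geq 0$ by the same derivative calculation) evaluated at $t=|\nabla_{xy}\ph|$ and $t=|\nabla_{xy}\psi|$; since $|\nabla_{xy}\psi|\leq|\nabla_{xy}\ph|$, we conclude.

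I expect the identity $D=D'$ above — namely, that truncation at a single threshold preserves the quantity $|\nabla_{xy}\ph|-|\ph(x)|-|\ph(y)|$ on precisely those bonds where the truncation is one-sided — to be the main technical obstacle. It is the ingredient that is missing for a general normal contraction and that is responsible for the restriction $p\geq 2$ in the third claim.
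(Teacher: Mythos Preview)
Your proof is correct, and the identity $D=|\nabla_{xy}\ph|-|\ph(x)|-|\ph(y)|=|\nabla_{xy}\psi|-|\psi(x)|-|\psi(y)|$ does hold: in the configuration $\ph(x)>\beta\geq\ph(y)$ one has $\ph(x)>0$, so both sides equal $-\ph(y)-|\ph(y)|\leq 0$, independently of the sign of $\ph(y)$.

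For the first and third claims your argument coincides with the paper's. For the truncation inequality when $1<p<2$, the paper proceeds slightly differently: it normalises to $\beta=1$ by homogeneity, rewrites
\[
\frac{\S{\ph}_{xy}}{b(x,y)(\sol{u}(x)\sol{u}(y))^{p/2}}=\frac{|\nabla_{xy}\ph|^{p}}{\Phi(\ph(y),\ph(x))^{2-p}},\qquad \Phi(s,t)=U\frac{|s|+|t|}{|s-t|}+1,
\]
and checks that $\partial_{s}\Phi\leq 0$ on the range $t<1<s$, so that truncating the larger endpoint simultaneously shrinks the numerator and enlarges the denominator. Your invariant $D$ is in fact the mechanism behind this monotonicity: a short computation gives $\partial_{s}\Phi=UD/|s-t|^{2}$, so the paper's sign condition is exactly your observation $D\leq 0$. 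Your packaging --- absorbing the invariant into a fixed parameter $E=-KD/2$ and reducing to the single-variable function $\tilde h(t)=t^{2}[(K/2+1)t+E]^{p-2}$ --- is a cleaner explanation of \emph{why} the two-variable problem collapses to one variable, and it handles $\beta=0$ directly rather than by a limit. The two arguments are equivalent in content; yours is more conceptual, the paper's more computational.
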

\begin{remark}
	Clearly, the lemma implies that the above properties of $S_u$ hold for $E_u$. 
\end{remark}
\begin{proof}[Proof of Lemma~\ref{lem:contr}]
The last statement is immediate as $ |\nabla  (\mathcal{C}\circ\ph)|\leq |\nabla \ph|$ and $ \langle{ |\mathcal{C}\circ\ph|}\rangle \leq \langle{ |\ph|}\rangle $. As $ |\cdot| $
and $ (-\alpha \vee (\cdot) \wedge \beta )$ are normal contractions, from now on, we let $ p< 2 $.

For the first statement, let  $ U\ge 0 $ and 	$ \Psi :[0,\infty)\to [0,\infty) $, $ \Psi(s)=s^2(U + s)^{p-2} $.
Then,
\begin{align*}
	\Psi'(s) = 2s(U+s)^{p-2}+ (p-2)s^2(U+s)^{p-3} =2s(U+s)^{p-2}\left(1- \left(1-\frac{p}{2} \right)\frac{s}{U+s}\right) \ge 0
\end{align*}
since $ s/(U+s)\leq 1  $ and $ (1-p/2)\le 1/2 $.
With $ U=\langle {|\ph|}\rangle|\nabla \sol{u}|/(\sol{u}\otimes \sol{u})^{{\frac{1}{2}}}$, we get since
$ |\nabla_{xy}  |\ph|| \le |\nabla_{xy}  \ph|$
 \begin{align*}
	\frac{\S{|\ph|}_{xy} }{b(x,y)(\sol{u}(x)\sol{u}(y))^\frac{p}{2} }= \Psi(|\nabla_{xy}  |\ph||)\leq  \Psi(|\nabla_{xy}  \ph|) =
		\frac{\S{\ph}_{xy} }{b(x,y)(\sol{u}(x)\sol{u}(y))^\frac{p}{2} }
\end{align*}
which proves the first statement.

To prove the second inequality, 
we show for $ \ph\in C_{c}(X) $
	\begin{align*}
	\S{ \ph \wedge 1 }\leq 	\S{\ph }.
\end{align*}
From this we conclude $ 	\S{ \ph \wedge \beta }\leq 	\S{\ph } $  for $ \beta> 0  $ by homogeneity of $S_u$. Similarly, for $\ga>0$ we obtain 	
$$  \S{(-\alpha)\vee\ph} = \S{-[\alpha\wedge -\ph]}  =\S{\alpha\wedge (-\ph)} \leq 	\S{-\ph } =	\S{\ph }.$$
For $ \alpha,\beta=0 $ the statement then follows by taking the limit. It remains to  show that  $ 	\S{ \ph \wedge 1 }_{xy}\leq 	\S{\ph }_{xy} $ for $ \ph\in C_{c}(X) $ and $ x,y\in X $. 

We can assume without loss of generality $ \ph(x)<1< \ph(y)  $ due to the symmetry and since the inequality  is trivial otherwise. Let  $ U>0 $	 and $ \Phi :\R^{2}\to [0,\infty) $ be given as
\begin{align*}
	\Phi (s,t) =  U\frac{|s|+|t|}{|s-t|}+1. 
\end{align*}
For {$ t<1<s$,} we have
\begin{align*}
	\partial_{s}\Phi (s,t) = U	\partial_{s} \left(\frac{s+|t|}{s-t}\right) =U\frac{(s-t)-s-|t|}{(s-t)^2} =
	U\frac{-2t\wedge 0}{(s-t)^2} \leq 0.
	\end{align*}
With $ U= |\nabla_{xy} \sol{u}|/(\sol{u}(x) \sol{u}(y))^{{\frac{1}{2}}}$, we get since
$ |\nabla_{xy}  (\ph\wedge 1)| \le |\nabla_{xy}  \ph|$ and $ p< 2$ 
\begin{align*}
	\frac{\S{(\ph\wedge 1)}_{xy} }{b(x,y)(\sol{u}(x)\sol{u}(y))^\frac{p}{2} }= 
	\frac{|\nabla_{xy}  (\ph\wedge 1)|^{p} }{\Phi(1,\ph(x))^{2-p}}
\leq  
\frac{|\nabla_{xy}  \ph|^{p} }{\Phi(\ph(y),\ph(x))^{2-p}}=
	\frac{\S{\ph}_{xy} }{b(x,y)(\sol{u}(x)\sol{u}(y))^\frac{p}{2} }.
\end{align*}
This finishes the proof.
\end{proof}

An immediate consequence of the lemma above is the following statement.
\begin{lemma}\label{l:cuttoff} There is $ C>0 $ such that for all strictly positive supersolutions  $ \sol{u} $  of \eqref{e:Q'} and for all $ \varphi\in C_{c}(X) $, we have
	\begin{align*}
		\Q(0\vee \ph\wedge \sol{u})\leq C\Q(\ph).
	\end{align*}
\end{lemma}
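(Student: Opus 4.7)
The plan is to reduce the statement to the contraction property of Lemma~\ref{lem:contr} via the ground--state type substitution $\psi := \varphi/u$. Since $u$ is strictly positive and $\varphi \in C_{c}(X)$, the function $\psi$ belongs to $C_{c}(X)$. A straightforward case analysis on the sign of $\varphi$ and on whether $\varphi \leq u$ gives the pointwise identity
$$u\cdot(0 \vee \psi \wedge 1) = 0 \vee \varphi \wedge u.$$
Setting $\tilde\psi := 0 \vee \psi \wedge 1 \in C_{c}(X)$, the claim thus reduces to bounding $\Q(u\tilde\psi)$ by a constant multiple of $\Q(u\psi) = \Q(\varphi)$.

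To carry this out, I would apply the left-hand inequality of Proposition~\ref{Prop:simp_energy} to $\tilde\psi$ to get
$$C_1\,\Q(u\tilde\psi) \leq \E(\tilde\psi) + \sum_{X} m\,u\,\QQ[u]\,|\tilde\psi|^{p}.$$
Lemma~\ref{lem:contr}, applied with $\alpha = 0$ and $\beta = 1$, yields $\E(\tilde\psi) \leq \E(\psi)$. For the potential term, the supersolution hypothesis gives $u\,\QQ[u] \geq 0$, while the definition of $\tilde\psi$ gives the pointwise bound $0 \leq \tilde\psi \leq |\psi|$, so
$$\sum_{X} m\,u\,\QQ[u]\,|\tilde\psi|^{p} \leq \sum_{X} m\,u\,\QQ[u]\,|\psi|^{p}.$$
Combining these two facts with the right-hand inequality of Proposition~\ref{Prop:simp_energy} applied to $\psi$ then yields
$$C_1\,\Q(u\tilde\psi) \leq \E(\psi) + \sum_{X} m\,u\,\QQ[u]\,|\psi|^{p} \leq C_2\,\Q(u\psi) = C_2\,\Q(\varphi),$$
so the conclusion follows with $C = C_2/C_1$, and the constant indeed depends only on $p$ through the constants in Proposition~\ref{Prop:simp_energy}.

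There is no serious obstacle here; the only conceptual point worth flagging is that the simplified energy $\E$ acts as a bridge. The contraction behavior of $S_u$ does not transfer to $\Q$ directly, since $\Q$ is not convex and its sign--changing terms do not admit a monotone truncation. It transfers only after one sandwiches $\Q$ between the two sides of the simplified--energy comparison, and only if the potential contribution $\sum_X m u\QQ[u]|\cdot|^p$ behaves monotonically under $|\tilde\psi|\le|\psi|$; this is exactly where the supersolution hypothesis enters in an essential way.
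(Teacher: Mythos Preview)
Your proof is correct and follows essentially the same approach as the paper's: substitute $\psi=\varphi/u$, apply Proposition~\ref{Prop:simp_energy} on both sides, and use Lemma~\ref{lem:contr} together with $u\QQ[u]\ge 0$ and $|\tilde\psi|\le|\psi|$ in between. The paper's proof is just a one-line compressed version of exactly what you wrote.
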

\begin{proof}
	By the simplified energy, Proposition~\ref{Prop:simp_energy}, and the lemma above we get with $ f=u\QQ[u]\ge 0$ and $\psi=\ph/u$
	\begin{align*}
	 C_{1}	\Q(0\vee \ph\wedge \sol{u})  \leq \E (0\vee \psi\wedge 1)+\sum_{X}m f |0\vee \psi\wedge 1|^{p} \le\E(\psi)+\sum_{X}m f | \psi|^{p}\leq    C_{2}\Q(\ph).
	\end{align*}
This completes our proof.
\end{proof}

\subsection{Generalized capacity} \label{subsec-cap}
Inspired by the previous work \cite{Das_Pinchover1}, we extend the classical definition of $p$-capacity on compact sets in $X$ to the case of the nonnegative functional $\Q$. 
\begin{definition}[$\Q$-capacity] \label{Def:Cap} {\em Let ${\sol{u}} \in  C(X)$ be a {strictly} positive function.  
		For a {compact set $K$ in $X$,} the {\em  $\Q$-capacity of $K$ with respect to ${\sol{u}}$} is defined by
		%	$${\rm{Cap}}_{\sol{u}}(K):=\inf \{\Q(\varphi) \mid\varphi \in \mathcal{N}_{K,\sol{u}}(X)\} \,,$$
		%	where $\mathcal{N}_{K,\sol{u}}(X):=\{\varphi \in   C_c(X) \mid \varphi \geq {\sol{u}} \ \text{on} \ K\}$. 
		\begin{align*}
			{\rm{Cap}}_{\sol{u}}(K):=\inf \{\Q(\varphi) \mid\varphi \in C_{c}(X),\; \ph\geq 1_{K}{\sol{u}} \} \,.
		\end{align*}
	}
\end{definition}
Let us briefly discuss some of the  properties of ${\rm{Cap}}_{\sol{u}}$ (cf. \cite{Das_Pinchover1} and references therein).

%%%%%%%%%%% 
\begin{lemma} \label{Cap_def_eqiv} Let ${\sol{u}} \in  C(X)$ be a {strictly} positive function and a compact set  $K $ in  $ X$. Then
	$$ {\rm{Cap}}_{\sol{u}}(K)=\inf \{\Q(\psi  {\sol{u}}) \mid \psi \in  C_c(X), \; \psi \geq 1_{K} \,\}. $$
\end{lemma}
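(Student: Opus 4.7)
My plan is to prove both inequalities by exploiting the pointwise identification $\varphi = \psi \sol{u}$ between the two classes of admissible test functions. Because $\sol{u}$ is strictly positive by hypothesis, this correspondence is a bijection and preserves the value of $\Q$ exactly, so the lemma reduces to a routine unwinding of Definition~\ref{Def:Cap}.

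For the inequality $\cp(K) \leq \inf\{\Q(\psi\sol{u}) \mid \psi\in C_c(X),\ \psi \geq 1_K\}$, I would fix an arbitrary $\psi \in C_c(X)$ with $\psi \geq 1_K$ and set $\varphi := \psi \sol{u}$. Since $\sol{u} \in C(X)$ is finite-valued, $\varphi$ has compact support, so $\varphi \in C_c(X)$. Combining $\psi \geq 1_K$ with $\sol{u} > 0$ gives $\varphi \geq 1_K \sol{u}$, so $\varphi$ is admissible in Definition~\ref{Def:Cap}. Hence $\cp(K) \leq \Q(\varphi) = \Q(\psi \sol{u})$, and taking the infimum over $\psi$ gives the desired direction.

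For the reverse inequality, I would start from an arbitrary $\varphi \in C_c(X)$ with $\varphi \geq 1_K \sol{u}$ and set $\psi := \varphi/\sol{u}$. Strict positivity of $\sol{u}$ makes this quotient well-defined pointwise, and the inclusion $\{\psi \neq 0\} \subseteq \{\varphi \neq 0\}$ ensures that $\psi \in C_c(X)$. Dividing $\varphi \geq 1_K \sol{u}$ by $\sol{u} > 0$ yields $\psi \geq 1_K$, so $\psi$ is a competitor for the right-hand infimum. Since $\psi \sol{u} = \varphi$, we have $\Q(\psi \sol{u}) = \Q(\varphi)$; passing to the infimum over such $\varphi$ completes the argument.

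There is essentially no substantive obstacle here: the lemma is really a change of variables, and the only point that uses a hypothesis is that strict positivity of $\sol{u}$ permits the bijective, $\Q$-preserving correspondence $\psi \leftrightarrow \psi \sol{u}$ between the two admissible classes. The statement will be useful later because it lets one work with dimensionless cut-off functions $\psi \geq 1_K$ in place of the $\sol{u}$-weighted cut-offs $\varphi \geq 1_K \sol{u}$.
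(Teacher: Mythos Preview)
Your proof is correct and follows exactly the same idea as the paper's: the bijection $\psi\leftrightarrow\psi\sol{u}$ between the two admissible classes, valid because $\sol{u}>0$, makes the two infima coincide. The paper compresses this into a single sentence, but your expanded version adds nothing new and omits nothing essential.
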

\begin{proof}
	%	(a) 
	This is immediate as $ \psi \sol{u} \ge \sol{u} $  { on $ K $} if and only if $\psi\ge 1 $ on $ K $.	
	%	(b) One can easily verify that $\varphi \in \mathcal{N}_{K,\sol{u}}(X)$ implies that $|\varphi| \in \mathcal{N}_{K,\sol{u}}(X)$ and $\Q(|\varphi|) \leq \Q(\varphi)$. Hence, in the definition of the $\Q$-capacity, it is enough to  consider only $\varphi \in \mathcal{N}_{K,\sol{u}}(X)$ with $ \varphi\ge 0 $. Furthermore, for any $\varphi \in \mathcal{N}_{K,\sol{u}}(X)$,   we have ${\varphi}\wedge \sol{u}=\in \mathcal{N}_{K,\sol{u}}(\Om)$, and if $c \geq 0$ then $\Q({\varphi}\wedge\sol{u}) \leq \Q(\varphi)$ (using the inequality $|\al \wedge \gamma -\beta \wedge \gamma | \leq |\al -\beta|$ for any $\al,\beta, \gamma \in \R$). 	Thus, the statement follows.
\end{proof}

\begin{remark}
	In view of the lemma above, when $c \geq 0$ by choosing ${\sol{u}} = 1$, we see that our definition of $\Q$-capacity coincides with the definition in \cite{Florian_cri}. {Below, we discuss the case of general potential $ c $.}
\end{remark}
%%%%%%%%%%%%%%%%%%%%%%%%%%%%%	
Next, we discuss two alternative versions of the generalized capacity and show that they are equivalent to the one above.
First, {fix a strictly positive $u\in C(X)$.} For a compact set $ K$ in  $ X $, let
\begin{align*}
	\widetilde{{\rm{Cap}}}_{\sol{u}}(K):=\inf \{\Q(\ph) \mid   \ph \in C_{c}(X),\;\sol{u} \geq \ph\geq  1_K \sol{u}\} 
\end{align*}
and secondly
\begin{align*}
	{\rm{Cap}}_{{\sol{u}},{\rm{Sim}}}(K):=\inf\{E_{{\sol{u}}}(\varphi)+\sum_{X}mu\QQ[u]|\varphi|^{p} \mid \varphi \in C_c(X), \ \varphi \geq 1_K \} \,.
\end{align*}
For positive functions $ F $ and $ G $, we write $ F\asymp G $ if there is a constant $ C>0 $ such that $ C^{-1}F \leq G\leq C F$.

\begin{lemma}\label{lem:cap} Let $ \sol{u} $ be a strictly positive supersolution of of \eqref{e:Q'}. Then,
	\begin{align*}
{\rm{Cap}}_{{\sol{u}}}  \asymp	\widetilde{{\rm{Cap}}}_{\sol{u}}\asymp	{\rm{Cap}}_{{\sol{u}},{\rm{Sim}}}	.
	\end{align*}
Furthermore, $\cp(K)=0$ if and only if ${\rm{Cap}}_{{{1}}}(K)=0$  for compact
$K $ in  $ X$.
\end{lemma}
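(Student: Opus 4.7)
My plan is to derive all four claimed equivalences from the two structural inputs already in hand: Lemma~\ref{l:cuttoff} (truncation from above by $\sol u$) and Proposition~\ref{Prop:simp_energy} (the two-sided comparison between $\Q$ and the simplified energy), using the $p$-homogeneity of $\Q$ and the finiteness of $K$ for the rest. For $\cp\asymp\widetilde{{\rm Cap}}_{\sol u}$, the bound $\cp(K)\le\widetilde{{\rm Cap}}_{\sol u}(K)$ is immediate since the feasible set for $\widetilde{{\rm Cap}}_{\sol u}$ sits inside that of $\cp$. For the reverse, given $\varphi\in C_c(X)$ with $\varphi\ge 1_K\sol u$ (which forces $\varphi\ge 0$ pointwise, as $1_K\sol u\ge 0$), I would form $\tilde\varphi:=\varphi\wedge\sol u=0\vee\varphi\wedge\sol u$; this function lies in $C_c(X)$ (off the support of $\varphi$ it equals $0\wedge\sol u=0$), is bounded above by $\sol u$, and satisfies $\tilde\varphi\ge 1_K\sol u$ (on $K$, $\varphi\ge\sol u$ forces $\tilde\varphi=\sol u$; off $K$ the bound is automatic). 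Lemma~\ref{l:cuttoff} then yields $\Q(\tilde\varphi)\le C\Q(\varphi)$, and infimising over $\varphi$ closes this case. For $\cp\asymp{\rm Cap}_{\sol u,{\rm Sim}}$, Lemma~\ref{Cap_def_eqiv} lets me rewrite $\cp(K)=\inf\{\Q(\psi\sol u):\psi\in C_c(X),\,\psi\ge 1_K\}$, so both capacities are now infima over the \emph{same} feasible set, and the pointwise sandwich from Proposition~\ref{Prop:simp_energy} transfers directly.

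For $\cp(K)=0\iff{\rm Cap}_1(K)=0$, I would exploit finiteness of $K$ to set $m_K:=\min_K\sol u>0$ and $M_K:=\max_K\sol u<\infty$. If $\cp(K)=0$, choose $\varphi_n\in C_c(X)$ with $\varphi_n\ge 1_K\sol u$ and $\Q(\varphi_n)\to 0$; then $\psi_n:=\varphi_n/m_K\ge 1_K$ and $\Q(\psi_n)=m_K^{-p}\Q(\varphi_n)\to 0$ by $p$-homogeneity. Conversely, if ${\rm Cap}_1(K)=0$ with $\psi_n\in C_c(X)$, $\psi_n\ge 1_K$, $\Q(\psi_n)\to 0$, then $\chi_n:=0\vee(M_K\psi_n)\wedge\sol u$ lies in $C_c(X)$, satisfies $\chi_n\ge 1_K\sol u$ (on $K$ one has $M_K\psi_n\ge M_K\ge\sol u$, so $\chi_n=\sol u$; off $K$ the lower bound is automatic), and Lemma~\ref{l:cuttoff} gives $\Q(\chi_n)\le CM_K^{p}\Q(\psi_n)\to 0$.

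The only point that would give me pause is sign bookkeeping: both admissibility constraints $\varphi\ge 1_K\sol u$ and $\psi\ge 1_K$ already force nonnegativity, so the ``$0\vee$'' in Lemma~\ref{l:cuttoff} is cosmetic in this proof. What would otherwise be the real hurdle -- producing a truncation at level $\sol u$ without extra information on $\sol u$ -- is precisely what Lemma~\ref{l:cuttoff} hands me via the supersolution hypothesis on $\sol u$ assumed in the present lemma, so the argument is routine.
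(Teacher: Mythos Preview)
Your proof is correct and follows essentially the same route as the paper: the inequality $\cp\le\widetilde{{\rm Cap}}_{\sol u}$ is trivial, the reverse comes from Lemma~\ref{l:cuttoff}, and $\cp\asymp{\rm Cap}_{\sol u,{\rm Sim}}$ is a direct consequence of Proposition~\ref{Prop:simp_energy} after the change of variables of Lemma~\ref{Cap_def_eqiv}. The one cosmetic difference is in the implication ${\rm Cap}_1(K)=0\Rightarrow\cp(K)=0$: the paper simply scales, observing that $\varphi\ge 1_K$ implies $M_K\varphi\ge 1_K\sol u$ so that $\cp(K)\le M_K^{p}\,{\rm Cap}_1(K)$, whereas you additionally truncate by $\sol u$ via Lemma~\ref{l:cuttoff}; this extra step is harmless but unnecessary.
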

\begin{proof}
	The equivalence $ {\rm{Cap}}_{{\sol{u}}} \asymp		{\rm{Cap}}_{{\sol{u}},{\rm{Sim}}}   $ follows from the simplified energy, Proposition~\ref{Prop:simp_energy}.  
	The inequality $ {\rm{Cap}}_{{\sol{u}}}  \leq 	\widetilde{{\rm{Cap}}}_{\sol{u}}$ is trivial. The reverse inequality $ {\rm{Cap}}_{{\sol{u}}}  \geq C	\widetilde{{\rm{Cap}}}_{\sol{u}}$ for some $ C>0 $ follows by  Lemma~\ref{l:cuttoff} above.
	
	For the second statement, assume first that ${\rm{Cap}}_{{{1}}}(K)=0$. Consider ${\sol{u}}_K={{\sol{u}}}/{\|{\sol{u}} 1_K\|_{{\infty}}}$. Then, it is clear that ${\rm{Cap}}_{{\sol{u}}_K}(K)\leq {\rm{Cap}}_{{{1}}}(K).$ This implies  $$ \cp(K)=\frac{1}{\|{\sol{u}}1_K\|_{{\infty}}^p} {\rm{Cap}}_{{\sol{u}}_K}(K)\leq \frac{1}{\|{\sol{u}}1_K\|_{{\infty}}^p}{\rm{Cap}}_{{{1}}}(K) =0. $$  Conversely, suppose that $\cp(K)=0$, and consider $\widetilde{{\sol{u}}}_K= {{\sol{u}}}/{\inf_{K}{\sol{u}}}$. Then, following a similar argument, we conclude that ${\rm{Cap}}_{{{1}}}(K)=0$.	
\end{proof}

%Above we have already seen that the simplified energy is compatible with all normal contractions for $ p\ge2  $ and with certain normal contractions for $ 1<p<2 $. Below we show we can say a bit more even in the case $ 1<p<2 $.

%%%%%%%%%%%%%%%%%%%%%%%%%%%%%%%%
%%%%%%%%%%%%%%%%%%%%%%%%%%%%%%
\section{The space of Hardy-weights} \label{sec_Mazya}
 In this section we study a Maz'ya type criterion to characterize the Hardy weights in terms of the generalized $ p $-capacity. This extends the works of \cite{Das_Pinchover1,Dyda,Mazya2} to discrete graphs.
 
{A function  $g$ which satisfies for some $ C>0 $ the following Hardy-type inequality  
 \begin{align*}
 C \sum_{ X}  m|g| |\varphi|^p \leq  \Q(\varphi) 
 \end{align*}
 for all $ \varphi \in  C_c(X)    $ is called a {\em Hardy-weight} for $Q$, in which case we write 
 $$ C |g|\leq \Q\;\mbox{ on  }\;C_{c}(X) . $$ 
The first aim of the present section is to characterize the space of all Hardy-weights. }
 We denote the space of all Hardy-weights  by 
$$\H =\mathcal{H}_p(X,m,b,c):=\{g \in C(X) \mid C |g| \leq \Q\mbox{ for some }C>0\}.$$
The nonnegative functional $\Q$ is said to be {\em subcritical (resp., critical) in $X$} if the space  $\H\neq \{0\}$ (resp., $\H = \{0\}$). 
\begin{remark}\label{rem_crit} 
(a) Using a partition of unity argument, it follows that if $\Q$ is subcritical in $X$, then there exists $0<w\in \H\cap L^\infty(X,m)$ (see \cite[Corollary 5.6]{Florian_cri}).

(b) $\Q$ is critical in $X$ if and only if $\QQ[\varphi]=0$ admits a unique positive supersolution (up to a multiplicative positive constant) \cite{Florian_cri}. In fact,  such a supersolution is a strictly positive solution and is called an {\em Agmon ground state}.
\end{remark}

The best constant $C\ge 0$ that satisfies the  inequality $ C|g|\leq \Q $ is called the {\em Hardy constant} of a function $ g $ and it is denoted by $ C(g) $. By definition,  $ g \in  \mathcal{H} $ if and only if $ C(g)>0 $.
We denote the reciprocal of the {Hardy constant} $C(g)$  by {$\norm{g}_{\H}$, i.e.} 
  \begin{align*}
  	\norm{g}_{\H}=\sup_{\ph\in C_{c}(X),\Q(\ph)\neq 0} \frac{\sum_{X}m|\ph|^{p}|g|}{\Q(\ph)}=\frac{1}{C(g)}.
  \end{align*}
It is immediate to see that $ \norm{\cdot}_{\H} $ is a norm on $ \H $. Furthermore, let ${\sol{u}} \in  C(X)$ be a positive function. Recalling the definition  of ${\rm{Cap}}_{\sol{u}}(K)$ (see Definition~\ref{Def:Cap}), i.e.  the  $\Q$-capacity of a compact set 
    $K \subset X$ with respect to ${\sol{u}}$, we define for $ g\in C(X) $
\begin{eqnarray*}
	\norm{g}_{\H,\sol{u}}:=\sup_{K \Subset X, \ \cp(K)\neq 0} \frac{\sum_{ K} m{\sol{u}}^p|g| }{\cp(K)} .
 \end{eqnarray*}
Clearly, $0\leq\norm{g}_{\H,\sol{u}}\leq \infty$. Again it is immediate to see that $ \norm{\cdot}_{\H,\sol{u}} $ is a norm, however it is not a priori clear that it takes finite values on $ \H $ and infinite values outside of $ \H $. For {positive supersolutions} $u$ of \eqref{e:Q'},  we  prove {in the following theorem that the norms $ \norm{\cdot}_{\H} $ and $ \norm{\cdot}_{\H, \sol{u}} $ are equivalent.} This shows, in particular, that up to equivalence of norms $ \norm{\cdot}_{\H,\sol{u}} $ is independent of $ \sol{u} $. The proof is inspired by the proofs of \cite[Theorem~1.2]{Das_Pinchover1} and \cite[Proposition~5]{Dyda} but utilizes new ideas for the compatibility of the simplified energy with certain normal contractions.

\begin{theorem}[Maz'ya-type characterization] \label{Thm:Char} 	There is a  constant $ C_{p} $ depending only on $ p $  such that for all functions $ g $ and all strictly positive  supersolutions $ \sol{u} $ of \eqref{e:Q'} 
	%with $ 0\leq f\in C_{c}(X) $
	\begin{align*}
		\norm{g}_{\H,\sol{u}} \leq \norm{g}_{\H}\leq C_p \norm{g}_{\H,\sol{u}}.
	\end{align*}
%A function $ g $ satisfies $\norm{g}_{\H,\sol{u}}<\infty$ if and only if $g$ is a Hardy-weight of $\Q$.  Moreover, let $\B:={B}_g(X,m,b,c)$ be the best constant in $ g\lesssim \Q $, then $$\norm{g}_{\H,\sol{u}} \leq \B \leq C_p \norm{g}_{\H,\sol{u}},$$ where $C_p$ depends only on $p$. Furthermore, $\|g\|_{B} :=\B$ is an equivalent norm on $\H$. In particular, up to the equivalence relation of norms, the norm $\norm{\cdot}_{\H_p^{\sol{u}}(X,m,c)}$ is independent of the positive solution ${\sol{u}}$.
\end{theorem}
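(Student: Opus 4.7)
\emph{Sketch of the strategy.} The plan is to prove the two inequalities separately. The bound $\norm{g}_{\H,\sol{u}} \leq \norm{g}_{\H}$ follows almost immediately from Lemma~\ref{Cap_def_eqiv}: for a compact $K$ and any $\psi \in C_c(X)$ with $\psi \geq 1_K$, one has $|\psi\sol{u}|^p \geq 1_K\sol{u}^p$ pointwise, hence
\[
 \sum_K m\sol{u}^p|g| \;\leq\; \sum_X m|g||\psi\sol{u}|^p \;\leq\; \norm{g}_{\H}\,\Q(\psi\sol{u}).
\]
Taking the infimum over such $\psi$ and then the supremum over $K$ yields the inequality.

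For the reverse direction $\norm{g}_{\H} \leq C_p \norm{g}_{\H,\sol{u}}$, I would adapt the dyadic Maz'ya--Dyda approach. Given $\varphi \in C_c(X)$, the compatibility of $\Q$ with the absolute value reduces matters to $\varphi \geq 0$. Set $\psi := \varphi/\sol{u}$ and introduce the compact level sets $A_k := \{\psi > 2^k\}$ together with the truncations
\[
\psi_k := (\psi - 2^{k-1})_+ \wedge 2^{k-1}, \qquad k \in \mathbb{Z}.
\]
Since $\psi_k/2^{k-1} \geq 1_{A_k}$, Lemma~\ref{Cap_def_eqiv} gives $\cp(A_k) \leq 2^{-(k-1)p}\Q(\sol{u}\psi_k)$. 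Decomposing $X$ into dyadic shells $A_k \setminus A_{k+1}$ (on which $\psi \leq 2^{k+1}$) and summing by parts,
\[
 \sum_X m\sol{u}^p|g|\psi^p \;\leq\; C_p\sum_k 2^{pk}\sum_{A_k}m\sol{u}^p|g| \;\leq\; C_p\norm{g}_{\H,\sol{u}}\sum_k 2^{pk}\cp(A_k) \;\leq\; C_p\norm{g}_{\H,\sol{u}}\sum_k\Q(\sol{u}\psi_k).
\]
Since $\sol{u}\psi = \varphi$, the entire theorem reduces to the key inequality $\sum_k \Q(\sol{u}\psi_k) \leq C_p \Q(\varphi)$.

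To verify this, I would invoke the simplified energy (Proposition~\ref{Prop:simp_energy}) to bound each $\Q(\sol{u}\psi_k)$ from above and $\Q(\varphi)=\Q(\sol{u}\psi)$ from below, using $\sol{u}\QQ[\sol{u}] \geq 0$ (supersolution property) to control the potential terms. It then suffices to prove the pointwise and edge-wise sub-estimates
\[
\sum_k \psi_k(x)^p \leq C_p\psi(x)^p \quad (x \in X), \qquad \sum_k \S{\psi_k}_{xy} \leq C_p\S{\psi}_{xy} \quad (x,y \in X).
\]
The first is a straightforward geometric series: if $\psi(x) \in [2^{K-1}, 2^K)$, then $\psi_k(x) = 2^{k-1}$ for $k \leq K-1$ (capped) and $\psi_k(x) = 0$ for $k \geq K+1$, so the total is $\leq C_p 2^{Kp} \leq C_p \psi(x)^p$.

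The main obstacle is the edge-wise inequality $\sum_k \S{\psi_k}_{xy} \leq C_p\S{\psi}_{xy}$, which is where the compatibility of the simplified energy with normal contractions (Lemma~\ref{lem:contr}) becomes essential. Fix $x,y$ with $\psi(x) \leq \psi(y)$; since each $\psi_k$ is a normal contraction of $\psi$, one has $\psi_k(x) \leq \psi_k(y)$, and only a geometrically bounded window of levels $k$ (those whose threshold $2^{k-1}$ actually separates $\psi(x)$ from $\psi(y)$, possibly with a one-sided tail when $\psi(x)=0$) contributes nontrivially. Substituting the explicit dyadic values of $\psi_k(x),\psi_k(y)$ into the formula for $\S{\cdot}_{xy}$ turns the sum into a geometric series whose ratio with $\S{\psi}_{xy}$ is controlled by the bracket $|\nabla_{xy}\sol{u}|\langle|\psi|\rangle_{xy} + (\sol{u}(x)\sol{u}(y))^{1/2}|\nabla_{xy}\psi|$. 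For $p \geq 2$, the normal-contraction property of Lemma~\ref{lem:contr} already gives the term-by-term bound $\S{\psi_k}_{xy} \leq \S{\psi}_{xy}$ and the geometric accounting becomes bookkeeping; for $p < 2$, the monotonicity-in-one-variable argument underlying the proof of Lemma~\ref{lem:contr} has to be re-invoked dyadically, and this is the delicate technical step.
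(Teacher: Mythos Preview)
Your proof of $\norm{g}_{\H,\sol{u}} \leq \norm{g}_{\H}$ is correct and identical to the paper's. For the reverse inequality your dyadic reduction is also fine, and the pointwise sum $\sum_k \psi_k^p \leq C_p\psi^p$ is correct. The genuine issue is the edge-wise inequality $\sum_k \S{\psi_k}_{xy} \leq C_p\S{\psi}_{xy}$ for $1<p<2$.

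For $p\ge 2$ your scheme actually works, and rather elegantly: since $\psi_k(z)=\min(\psi(z),2^k)-\min(\psi(z),2^{k-1})$, the differences telescope to give $\sum_k|\nabla_{xy}\psi_k|=|\nabla_{xy}\psi|$; combined with $\langle\psi_k\rangle\le\langle\psi\rangle$, $|\nabla\psi_k|\le|\nabla\psi|$ and $\sum a_k^2\le(\sum a_k)^2$ for $a_k\ge 0$, one obtains $\sum_k\S{\psi_k}_{xy}\le\S{\psi}_{xy}$ directly. This is a genuinely different route from the paper, which instead uses tent-shaped test functions supported on thin annuli $B_k=A_{k-1}\cup A_k\cup A_{k+1}$ and exploits the bounded overlap $\sum_k 1_{B_k}\le 3$ to pass from the single-level bound $\S{\psi_k}\le C2^{-kp}\S{\psi}$ (via Lemma~\ref{lem:contr}) to the sum.

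For $p<2$, however, your edge-wise inequality is \emph{false} in general. Take $\psi(x)=2^{K-1}$ and $\psi(y)=2^{K-1}+\varepsilon$ with $U:=|\nabla_{xy}\sol{u}|/(\sol{u}(x)\sol{u}(y))^{1/2}>0$. Then only $k=K$ contributes (for $k\le K-1$ both values are capped at $2^{k-1}$, for $k\ge K+1$ both vanish), and for that single level $\psi_K(x)=0$, $\psi_K(y)=\varepsilon$, so
\[
\frac{\S{\psi_K}_{xy}}{\S{\psi}_{xy}}
=\Big(\frac{U(2^{K-1}+\varepsilon/2)+\varepsilon}{U\,\varepsilon/2+\varepsilon}\Big)^{2-p}\xrightarrow[\varepsilon\to 0]{}\infty.
\]
Thus no constant $C_p$ can work, and the ``monotonicity-in-one-variable argument re-invoked dyadically'' cannot rescue the estimate with your truncations $(\psi-2^{k-1})_+\wedge 2^{k-1}$: these are supported on the \emph{nested} sets $\{\psi>2^{k-1}\}$, so there is no overlap control either. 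The paper circumvents this by using test functions supported only on the annuli $B_k$ and, for $p<2$, by replacing the tent $\psi_k$ with $\psi_k^{2/p}$; the estimate $\S{\psi_k^{2/p}}\le C\,2^{-kp}\S{\psi}$ then requires the rather delicate case analysis of Lemma~\ref{Lem:ujjalp>2}. Your proposal is missing precisely this ingredient in the range $1<p<2$.
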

\begin{proof}
{$ \norm{g}_{\H,\sol{u}} \leq \norm{g}_{\H} $:} Let $g \in\H$  and  $K \subset X$ be a compact set. Then for all $\ph \in  C_c(X)$ with $\ph \geq 1_K u$,
\begin{align*}
 \sum_{ K} m {\sol{u}}^p |g|  \leq \sum_{ X} m  |\ph|^p| g| \leq  \norm{g}_{\H} \Q(\ph) \,.
\end{align*}
By taking infimum over all such $ \ph $, we obtain that for all compact sets  $K$ in $X$
\begin{align*} 
 \sum_{ K} m {\sol{u}}^p |g|  \leq  \norm{g}_{\H}     {\rm{Cap}}_{\sol{u}}(K) .
\end{align*}
Hence, $\|g\|_{\H,\sol{u}} \leq \norm{g}_{\H} $ and we are left to show the second inequality.\medskip

{$ \norm{g}_{\H}\leq C_p \norm{g}_{\H,\sol{u}} $:} Let $ g\in C(X) $ be  such that $\|g\|_{\H,\sol{u}}<\infty$, i.e. for all compact sets  $K$
\begin{align*}%\label{Mazya_Cond}
	\sum_{ K}  m {\sol{u}}^p|g| \leq \|g\|_{\H,\sol{u}} {\rm{Cap}}_{\sol{u}}(K).
\end{align*}
 Let $ \psi\in C_{c}(X) $, $ \psi\ge 0 $, and denote the following level-set annuli for $ k\in \mathbb{Z} $ by
\begin{align*}
	A_k:=\{2^{k}< \psi\le2^{k+1}\}\quad\mbox{and}\quad
	 B_{k}:= A_{k-1}\cup A_{k}\cup A_{k+1}.
\end{align*}
Then,
\begin{align*}
	\sum_{X}m|\psi{\sol{u}}|^p|g|& = \sum_{k\in\mathbb{Z}}\sum_{A_{k}}m|\psi{\sol{u}}|^p |g|  \leq \sum_{k\in\mathbb{Z}}\sum_{A_{k}}2^{(k+1)p}m|{\sol{u}}|^p|g| \nonumber\\
	&\leq \|g\|_{\H,\sol{u}}   \sum_{k \in \mathbb{Z}} 2^{(k+1)p}  {\rm{Cap}}_{\sol{u}}(A_{k}).
%	\leq 2^{2p} \|g\|_{\H,\sol{u}}   \sum_{k \in \mathbb{Z}} 2^{kp}  {\rm{Cap}}_{\sol{u}}(\Omega_{k+1})	\,.
\end{align*}
We outsource the following claim
\begin{align*} 
	{\rm{Cap}}_{\sol{u}}(A_{k}) &\leq 
	2^{-kp}C  \left(\sum_{B_{k}\times X}  \S{\psi}+\sum_{B_{k}}m u\QQ[u] |\psi|^{p} \right).
\end{align*}
 to Lemma~\ref{Lem:ujjalp>2}  below and use it together with the estimate above and the simplified energy, Proposition~\ref{Prop:simp_energy}, to conclude
\begin{align*}
	\sum_{X}m|g||\psi{\sol{u}}|^p&\leq C \|g\|_{\H,\sol{u}}   \sum_{k \in \mathbb{Z}} 2^{kp}  {\rm{Cap}}_{\sol{u}}(A_{k})\leq C \|g\|_{\H,\sol{u}}   \sum_{k \in \mathbb{Z}}   \left(\sum_{B_{k}\times X}  \S{\psi}+\sum_{B_{k}}m u\QQ[u] |\psi|^{p} \right)\\
	&\le C\|g\|_{\H,\sol{u}}  \left(\sum_{X\times X}  \S{\psi}+\sum_{X}m u\QQ[u] |\psi|^{p} \right)  \leq C\|g\|_{\H,\sol{u}}   \Q(\psi u).
\end{align*}
Hence, for $ \ph\in C_{c}(X) $, we let $ \psi =|\ph|/u $ and observe
\begin{align*}
	\sum_{X}m|g||\ph|^p=	\sum_{X}m|g||\psi{\sol{u}}|^p\leq C\|g\|_{\H,\sol{u}}   \Q(\psi u)= C\|g\|_{\H,\sol{u}}   \Q(|\ph|)\leq C\|g\|_{\H,\sol{u}}   \Q(\ph),
\end{align*}
where we used Lemma~\ref{lem:contr} in the last estimate. This finishes the proof. 
\end{proof}
The following lemma settles the claim used in the proof of the theorem above to estimate $ {\rm{Cap}}_{\sol{u}}(A_{k})  $  by the simplified energy.

\begin{lemma}\label{Lem:ujjalp>2}   For $1<p<\infty$, there exists $C>0$ such that for all strictly positive  supersolutions $ \sol{u} $ of \eqref{e:Q'} and  $0 \leq \psi\in C_{c}(X) $, we have  
	\begin{align*} 
		{\rm{Cap}}_{\sol{u}}(A_{k}) &\leq 
		2^{-kp}C  \left(\sum_{B_{k}\times X}  \S{\psi}+\sum_{B_{k}}m u\QQ[u] |\psi|^{p} \right) \ \ \forall k\in\mathbb{Z} \,,
	\end{align*}
	where $ A_k=\{2^{k}< \psi\le2^{k+1}\}$, $ B_{k}= (A_{k-1}\cup A_{k}\cup A_{k+1}) $.
\end{lemma}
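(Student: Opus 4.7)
The plan is to exhibit an explicit admissible test function for $\cp(A_k)$ shaped like a scaled tent of $\psi$, reduce to an estimate on the simplified energy via Proposition~\ref{Prop:simp_energy}, and close the argument using the compatibility of $\S{\cdot}$ with normal contractions from Lemma~\ref{lem:contr}.

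Concretely, take the piecewise-linear tent
\[
f_k(t)\;:=\;\min\!\bigl(2^{1-k}(t-2^{k-1})_+,\;1,\;2^{-(k+1)}(2^{k+2}-t)_+\bigr),
\]
supported in $[2^{k-1},2^{k+2}]$ and identically $1$ on $[2^k,2^{k+1}]$, and put $\phi_k:=f_k(\psi)\,\sol{u}\in C_c(X)$. Then $0\le\phi_k\le\sol{u}$, $\phi_k\equiv\sol{u}$ on $A_k$, and $\supp\phi_k\subseteq B_k$, so by Lemma~\ref{lem:cap} and Proposition~\ref{Prop:simp_energy},
\[
\cp(A_k)\;\le\;C\,\Q(\phi_k)\;\le\;C\Bigl(\E(f_k(\psi))+\sum_X m\sol{u}\QQ[\sol{u}]\,|f_k(\psi)|^p\Bigr).
\]
Since $|f_k(\psi)|^{p}\le\mathbf 1_{B_k}\le 2^{(1-k)p}|\psi|^p\mathbf 1_{B_k}$ (using $\psi>2^{k-1}$ on $B_k$), the potential sum is bounded by $2^{(1-k)p}\sum_{B_k}m\sol{u}\QQ[\sol{u}]|\psi|^p$. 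Moreover, $\supp f_k(\psi)\subseteq B_k$ forces $\S{f_k(\psi)}_{xy}$ to vanish unless an endpoint lies in $B_k$, so by symmetry $\E(f_k(\psi))\le 2\sum_{B_k\times X}\S{f_k(\psi)}_{xy}$.

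It remains to establish the edgewise comparison $\S{f_k(\psi)}_{xy}\le C\,2^{-kp}\,\S{\psi}_{xy}$ on all relevant edges. For $p\ge2$ this is clean: the rescaled tent $\mathcal C_k:=2^{k-1}f_k$ is $1$-Lipschitz with $\mathcal C_k(0)=0$ and hence a normal contraction, so Lemma~\ref{lem:contr} combined with the $p$-homogeneity $\S{\lambda\varphi}=\lambda^{p}\S{\varphi}$ yields
\[
2^{(k-1)p}\,\S{f_k(\psi)}_{xy}\;=\;\S{\mathcal C_k\circ\psi}_{xy}\;\le\;\S{\psi}_{xy},
\]
which rearranges to $\S{f_k(\psi)}_{xy}\le 2^{(1-k)p}\,\S{\psi}_{xy}$ as required.

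The main obstacle is the subquadratic case $1<p<2$: Lemma~\ref{lem:contr} only covers absolute values and symmetric cutoffs $(-\alpha)\vee(\cdot)\wedge\beta$, and a naive pointwise ratio $\S{f_k(\psi)}_{xy}/\S{\psi}_{xy}$ can in fact blow up when $\psi(x),\psi(y)$ both sit just above $2^{k-1}$ while $|\nabla_{xy}\psi|$ is small compared with $|\nabla_{xy}\sol u|\langle|\psi|\rangle/(\sol u\otimes\sol u)_{xy}^{1/2}$. Here my plan is to combine the Lipschitz estimates $|\nabla_{xy}f_k(\psi)|\le 2^{1-k}|\nabla_{xy}\psi|$ and $f_k(t)\le 2^{1-k}t$ (so that $\langle|f_k(\psi)|\rangle\le 2^{1-k}\langle|\psi|\rangle$) with the free localization lower bound $\langle|\psi|\rangle\ge 2^{k-2}$ available whenever an endpoint lies in $B_k$, and to split edges according to whether the bracket $\tfrac{|\nabla_{xy}\sol u|}{(\sol u\otimes\sol u)_{xy}^{1/2}}\langle|\psi|\rangle+|\nabla_{xy}\psi|$ in $\S{\psi}_{xy}$ is dominated by its average-value term or by its gradient term. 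This yields the necessary lower bound on the bracket of $\S{f_k(\psi)}_{xy}$ (needed since $(\cdot)^{p-2}$ is decreasing) in each regime, giving the pointwise edgewise estimate in the $p<2$ case. Assembling the three bounds yields $\cp(A_k)\le C\,2^{-kp}\bigl(\sum_{B_k\times X}\S{\psi}+\sum_{B_k}m\sol{u}\QQ[\sol{u}]|\psi|^p\bigr)$, which is the claim.
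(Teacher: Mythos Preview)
Your argument for $p\ge 2$ is correct and essentially matches the paper's: both build a piecewise-linear tent that is a rescaled normal contraction of $\psi$, apply Lemma~\ref{lem:contr} and Proposition~\ref{Prop:simp_energy}, and use that the support and gradient are localized to $B_k$.

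For $1<p<2$, however, there is a genuine gap. You correctly identify the obstacle, but the proposed remedy (the gradient-dominated vs.\ average-dominated split together with $\langle|\psi|\rangle\ge 2^{k-2}$) does not deliver the needed lower bound on the bracket of $\S{f_k(\psi)}_{xy}$. Concretely, take an edge with $U_{xy}:=|\nabla_{xy}u|/(u(x)u(y))^{1/2}>0$ and $\psi(x)=2^{k-1}+\varepsilon$, $\psi(y)=2^{k-1}+2\varepsilon$. Then $\langle|\psi|\rangle\approx 2^{k-1}$ (your localization bound is satisfied) and the $\psi$-bracket is average-dominated, $\asymp U_{xy}2^{k-1}$. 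But $\langle f_k(\psi)\rangle\asymp 2^{1-k}\varepsilon$ and $|\nabla f_k(\psi)|=2^{1-k}\varepsilon$, so the $f_k(\psi)$-bracket is $\asymp 2^{1-k}\varepsilon(U_{xy}+1)$. A direct computation gives
\[
\frac{\S{f_k(\psi)}_{xy}}{\S{\psi}_{xy}}\;\asymp\; 2^{(1-k)p}\,\varepsilon^{\,p-2}\cdot\Bigl(\frac{U_{xy}+1}{U_{xy}2^{k-1}}\Bigr)^{p-2}\xrightarrow[\varepsilon\to 0]{}\infty,
\]
so the pointwise edgewise estimate $\S{f_k(\psi)}_{xy}\le C\,2^{-kp}\S{\psi}_{xy}$ is simply false for your $f_k$. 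No splitting by regime rescues this, because in the average-dominated regime there is no lower bound on $\langle f_k(\psi)\rangle$ (it can be arbitrarily small while $\langle\psi\rangle$ stays $\asymp 2^{k-1}$).

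The paper resolves this by replacing the tent $\psi_k$ with $\psi_k^{2/p}$ as the test function for the capacity. Since $2/p>1$, the map $t\mapsto t^{2/p}$ flattens the tent near its edges: when $\psi_k$ is small its gradient is damped by the extra factor $\psi_k^{2/p-1}$, which exactly cancels the blow-up in the negative-power bracket. The technical vehicle is a pointwise comparison $\Psi(s^{2/p},t^{2/p})\le C\,\Psi(s+b,t+b)$ for $\Psi(s,t)=|s-t|^2\bigl(U(s+t)+|s-t|\bigr)^{p-2}$, which, applied case by case on the pieces of the tent (and using Lemma~\ref{lem:contr} for the remaining cutoffs), yields $\S{\psi_k^{2/p}}\le C\,2^{-kp}\S{\psi}$ edgewise. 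This $2/p$-power modification is the missing idea in your proposal.
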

\begin{proof}
We treat the cases $ p\ge 2 $ and $ p<2 $ separately.

\emph{Case $ 2\le p <\infty $}: For  positive $ \psi\in C_{c}(X) $, consider the function $\psi_k \in  C_c(X)$ given by
\begin{align*}
	\psi_k&:=0\vee\left[ (2^{-k+1}\psi-1)\wedge (4-(2^{-k+1}\psi-1))  \right]\wedge 1\\
&	=
\begin{cases}
	2^{-k+1}\psi -1 & \mbox{if $ 2^{k}\ge \psi>2^{k-1} $, i.e. on $  A_{k-1} $}, \\
	1& \mbox{if $ 2^{k+1}\ge \psi>2^{k} $, i.e. on $  A_{k} $} ,
	\\
4-(2^{-k+1}\psi-1) & \mbox{if $ 2^{k+1}+2^{k-1}\ge \psi>2^{k+1} $, i.e. on a subset of $  A_{k+1} $},\\
0& \mbox{ else}.
\end{cases}
	\end{align*}
Observe the fact that $ \psi_{k} $ is a normal contraction of $ 2^{-k+1}\psi $. Thus, we have by Lemma~\ref{lem:contr} 
\begin{align*}
\S{\psi_{k}}\leq \S{2^{-k+1}\psi}=2^{-(k-1)p}\S{\psi}.
\end{align*}
Furthermore,   $ |\nabla_{xy} \psi_{k}| $, and therefore also $ \S{\psi_{k}}_{xy} $, vanish outside of $(B_{k}\times X)\cup (X\times B_{k}
)$. Moreover,   $ \psi_{k} $ is supported on $ B_{k} $ and $ \psi_{k} \le 2^{-k+1}\psi  $.  Hence, by Lemma~\ref{Cap_def_eqiv} and the simplified energy, Proposition~\ref{Prop:simp_energy}, we estimate
\begin{align*} 
		 {\rm{Cap}}_{\sol{u}}(A_{k}) &\leq  \Q(\psi_k {\sol{u}}) \leq  C \left(\sum_{B_{k}\times  X}  \S{\psi_k} + \sum_{B_{k}}m u\QQ[u] |\psi_{k}|^{p}\right)\\
		 &\le 
		  2^{-kp}C  \left(\sum_{B_{k}\times X}  \S{\psi}+\sum_{B_{k}}m u\QQ[u] |\psi|^{p} \right).
\end{align*}
{Note that the second inequality uses the fact that $ \sol{u} $ is a positive supersolution of \eqref{e:Q'}}. This finishes the proof for $2\le p<\infty $.\medskip

\emph{Case $ 1< p <2 $}: 
The proof is similar  but a bit more involved than the corresponding proof for $ p\ge2 $. We replace the function $ \psi_{k}=0\vee\left[ (2^{-k+1}\psi-1)\wedge (4-(2^{-k+1}\psi-1))  \right]\wedge 1 $ in the proof  above by $ 
	\psi_{k}^{2/p}. $
Clearly, since $0\le \psi_{k} \leq 1 $, we have
\begin{align*}
	\psi_{k}^{2/p} \leq  \psi_{k} \leq  2^{-k+1} \psi1_{B_{k}}. 
\end{align*}	
Furthermore, we show below
\begin{align*}
	\S{	\psi_{k}^{2/p}} \leq 2^{-kp}C\S{\psi},
\end{align*}
where the constant $ C $ depends only on $ p $.  With these two adjustments one follows the proof  for the case $ p\ge 2 $ verbatim to obtain the desired result. 

Hence, we are left to show $ \S{	\psi_{k}^{2/p}} \leq 2^{-kp}C\S{\psi} $. 	We start by showing a claim.\smallskip
	
	\emph{Claim.} For fixed $ U\ge 0 $, let $ \Psi $ for $  s,t\ge 0 $ be given as
	\begin{align*}
		\Psi(s,t)=\frac{|s-t|^{2}}{(U(s+t ) +|s-t|)^{2-p}} \, 
	\end{align*}
and $ \Psi (0,0)=0 $. 
	Then, for $ a>0 $, $s,t \in[0,a] $ and $ b\ge0 $
	\begin{align*}
		\Psi (s^{2/p},t^{2/p})\leq  C_{p,a,b} \Psi(s+b,t+b),
	\end{align*}
	where $ C_{p,a,b}= 4(2a+2b)^{2-p}/p^{2} $.
	
	\emph{Proof of the  claim.} We assume one of $s,t>0$. Otherwise, the claim follows trivially. By symmetry we can assume $ s\ge t $. 	One easily sees by multiplying out
		$$(s-t)(s^{2/p}+t^{2/p}) \leq (s^{2/p}-t^{2/p}) (s+t) ,$$
		from which we deduce  for all $ b\geq 0 $ 
		$$(s-t)\left[ \frac{s^{2/p}+t^{2/p}}{s+t+2b}\right]\leq (s^{2/p}-t^{2/p}).$$
		Moreover, by the mean value theorem applied to the function $r \mapsto r^{2/p}$ one gets
		$$(s^{2/p}-t^{2/p})^2 \leq( 4/p^2) s^{\frac{2}{p}(2-p)}(s-t)^2.$$
		These two inequalities combined give
	\begin{align*}
	\Psi(s^{2/p},t^{2/p})	&=\frac{(s^{2/p}-t^{2/p})^{2}} {\left(U (s^{2/p}+t^{2/p}) +(s^{2/p}-t^{2/p})\right)^{2-p}} \leq    \frac{ ({4}/{p^2}) s^{\frac{2}{p}(2-p)}(s-t)^{2}}{\left(U(s^{2/p}+t^{2/p}) +\frac{(s^{2/p}+t^{2/p})}{(s+t+2b)}(s-t)\right)^{2-p}} \\
		&= \frac{4}{p^2} s^{\frac{2}{p}(2-p)}\frac{(s+t+2b)^{2-p}}{(s^{2/p}+t^{2/p})^{2-p}} \left(\frac{(s-t)^{2}}{\left(U(s+t+2b) +(s-t)\right)^{2-p}}\right) 	\leq C \Psi (s+b,t+b),
	\end{align*}
where we used $ s^{\frac{2}{p}(2-p)}(s^{2/p}+t^{2/p})^{p-2}\leq 1 $ and $ s+t+2b\leq 2a+2b $.
This proves the claim.\medskip

We now come to the main part of the proof, i.e. to show $ \S{	\psi_{k}^{2/p}} \leq 2^{-kp}C\S{\psi} $. 
	We  let $$  \ph =0\vee  (2^{-k+1}\psi)\wedge 5   \quad\mbox{and}\quad \eta= 0\vee[(\ph-1)\wedge (4-(\ph-1))]\wedge 1. $$
	Observe that $ \eta $ takes values in $ [0,1] $ and satisfies {$\eta=\psi_{k}$.}\smallskip
	
	We  distinguish three cases to show $$  \S{\eta^{2/p} }\leq{C}\S{\ph} .$$ Let $ x,y\in X $ and     $U= U_{xy}= |\nabla_{xy} \sol{u}|/[2(\sol{u}(x) \sol{u}(y))^{{\frac{1}{2}}}] $.\smallskip
	
	\emph{Case 1.} The statement is trivial if   $ \eta(x)=\eta(y)  $.\smallskip
	
	\emph{Case 2.} Assume  $ \eta(x)=0 $ and $ \eta(y)\neq 0 $, and note that this includes $ \eta(x)\neq0 $, $ \eta(y)=0 $ by symmetry.
	 	
	Then $  |\nabla_{xy} \eta ^{2/p}|=\eta(y)^{2/p}= \langle{\eta^{2/p}
	}\rangle_{xy} . $ Moreover, by the case distinction of $ \ph(x)\le 1 $ and 
$ \ph(x)\ge 5 $ (which is exactly when $ \eta(x) =0$), we obtain  $ \eta(y)\le |\nabla_{xy}\ph| $. Furthermore, since  $  \langle{\ph}\rangle_{xy}/5 ,|\nabla_{xy} \ph|/5\le 1 $, we obtain
	\begin{align*}
		\frac{\S{\eta ^{2/p}}_{xy} }{b(x,y)(\sol{u}(x)\sol{u}(y))^\frac{p}{2} }
		&= \frac{|\eta(y)|^{2}}{\left(U_{xy}+1\right)^{2-p}}\leq C\frac{|\nabla_{xy}\ph|^{2}}{\left(U_{xy} \langle{\ph}\rangle_{xy}+|\nabla_{xy}\ph|\right)^{2-p}}
		=C\frac{\S{\ph}_{xy} }{b(x,y)(\sol{u}(x)\sol{u}(y))^\frac{p}{2} }\, .
	\end{align*}

	\emph{Case 3.} Assume  $ \eta(x),\eta(y)\neq 0 $.  We distinguish three further cases which can be all treated by applying the claim proven above.\smallskip
	
	\emph{Case 3.1.} Assume $ \eta(x)=\ph(x)-1 $ and $ \eta(y)=\ph(y)-1 $.
	Then $\ph(x),\ph(y)\in  [1,2] $ and
	 by the claim applied with $ a=1,$  $b=1 $
	\begin{align*}
	\frac{\S{\eta ^{2/p}}_{xy} }{b(x,y)(\sol{u}(x)\sol{u}(y))^\frac{p}{2} }
	\!=\!\Psi((\ph(x)-1)^{\frac2p},({\ph(y)} - 1)^{\frac2p})\leq C \Psi(\ph(x),\ph(y))
	= C \frac{\S{\ph}_{xy} }{b(x,y)(\sol{u}(x)\sol{u}(y))^\frac{p}{2} }\, .
\end{align*}

\emph{Case 3.2.} Assume $ \eta(x)=4-(\ph(x)-1) $ and $ \eta(y)=(4-(\ph(y)-1)) \wedge 1$.  Then  $\ph(x)\in [4,5]$, $ \ph(y)\in[3,5] $ and we assumed without loss of generality $ \ph(x)\ge \ph(y) $.
For such $x,y$, we further set $ \tilde \eta =4-(\ph-1) $ and observe $  \eta^{2/p} =\tilde\eta^{2/p}\wedge 1 $. Thus, by Lemma~\ref{lem:contr}  we have $ \S{\eta ^{2/p}}\leq \S{\tilde\eta ^{2/p} }$.
 We obtain by the claim applied with $ a=1, b=5 $ and since $ |\nabla_{xy}\tilde\eta|= |\nabla_{xy} \ph|$
as well as $  \langle \tilde \eta\rangle_{xy}+5\geq 5\geq \langle \ph \rangle_{xy}$, we have 
\begin{multline*}
	\frac{\S{\eta ^{2/p}}_{xy} }{b(x,y)(\sol{u}(x)\sol{u}(y))^\frac{p}{2} }\leq 	\frac{\S{\tilde\eta ^{2/p}}_{xy} }{b(x,y)(\sol{u}(x)\sol{u}(y))^\frac{p}{2} }
=	\Psi(\tilde\eta(x)^{2/p},\tilde\eta(y)^{2/p})\leq C \Psi(\tilde\eta(x)+5,\tilde\eta(y)+5) 
\\	=C\frac{|\nabla_{xy}\tilde\eta|^{2}}{\left(U_{xy}(\langle \tilde\eta\rangle_{xy}+5)+{|\nabla_{xy}\tilde\eta|}\right)^{2-p}}
	\leq C \frac{|\nabla_{xy}\ph|^{2}}{\left(U_{xy}\langle \ph\rangle_{xy}+{|\nabla_{xy}\ph|}\right)^{2-p}}=	{C}\frac{{\S{\ph}_{xy}} }{b(x,y)(\sol{u}(x)\sol{u}(y))^\frac{p}{2} }\, .
\end{multline*}

\emph{Case 3.3.} Assume $ \eta(x)=(4-(\ph(x)-1))\wedge 1 $ and $ \eta(y)=(\ph(y)-1)\wedge 1 $ such that $ \eta(x)\neq \eta(y) $.  Then we consider either $\ph(x)\in [3,5]$ and $\ph(y)\in  [1,2] $  or $ \ph(x)\in [4,5] $ and $ \ph(y)\in [1,3] $ (as otherwise $ \eta(x)=\eta(y)=1 $ which is trivial and treated in Case 1).
Hence, $  |\nabla_{xy}\ph |\ge 1 $, and by the crude estimates $ 0\leq \eta(x),\eta(y)\leq  1$ as well as {$ 1\leq \ph(y)\leq 3\leq \ph(x)\leq 5 $}, we get
\begin{align*}
	|\nabla_{xy}\eta|\leq 1\leq  |\nabla_{xy}\ph |\quad \mbox{and}\quad 
	\frac{\langle \eta\rangle_{xy}+1}{	|\nabla_{xy}\eta|}\ge 1\ge\frac{1}{4}\cdot 
	\frac{\langle \ph\rangle_{xy}}{|\nabla_{xy}\ph|} \,.
\end{align*}
We proceed to  use these estimates together with  the claim  with $ a=4 $, $ b=1$ to deduce 
\begin{multline*}
\frac{\S{\eta ^{2/p}}_{xy} }{b(x,y)(\sol{u}(x)\sol{u}(y))^\frac{p}{2} }
=	\Psi(\eta(x)^{2/p},\eta(y)^{2/p})\leq C {\Psi(\eta(x)+1,\eta(y)+1)}\\
=C{|\nabla_{xy}\eta|^{p}}{\left(U\frac{\langle \eta\rangle_{xy}+1}{|\nabla_{xy}\eta|}+1\right)^{p-2}}
	\leq C{|\nabla_{xy}\ph|^{p}}{{\left(U\frac{\langle \ph\rangle_{xy}}{|\nabla_{xy}\ph|}+1\right)^{p-2}}}=C\frac{\S{\ph}_{xy} }{b(x,y)(\sol{u}(x)\sol{u}(y))^\frac{p}{2} }\, .
\end{multline*}

With  the estimate $ \S{\eta^{{2/p}}}\leq C\S{\ph}$ at hand and $ \psi_{k}=\eta $,  we use
	Lemma~\ref{lem:contr} for  $\ph= 0\vee  (2^{-k+1}\psi)\wedge 5 $  to conclude
	$$\S{\psi_{k}^{2/p}} {=} \S{\eta^{2/p}}\leq C	\S{\ph} \leq C	\S{2^{-k+1}\psi} =2^{-(k+1)p}C\S{\psi} $$ 
 which finishes the proof.
\end{proof}

\begin{remark} \rm
$(i)$ Observe that our characterization of Hardy weights in Theorem \ref{Thm:Char} characterizes the graphs where the Poincar\'e inequality holds. Indeed, in a graph $b$ on $(X,m)$ with potential $c$, the Poincar\'e inequality holds if and only if there exists $C>0$ such that
$$\sum_{K} m u^p \leq C {\mathrm{Cap}}_{u}(K)$$
for all compact $K \subseteq X$, where $ \sol{u} $ is a strictly positive  supersolution of \eqref{e:Q'}. That is, in such graphs, the measure $m u^p$ is absolutely continuous with respect to the ${\mathrm{Cap}}_{u}$.

$(ii)$ We give some concrete examples of Hardy weights for the $p$-Laplacian $\Delta_p$ on $\mathbb{Z}^d$ using our characterization.  Let $X=\mathbb{Z}^d$ with $d>p$, $b \equiv 1,m \equiv 1$, and $c  \equiv 0$. Then $L^{d/p}(\mathbb{Z}^d)$-functions are Hardy weights for the $p$-Laplacian $\Delta_p$ on $\mathbb{Z}^d$. To show this, first of all recall that $\mathcal{D}_0(\mathbb{Z}^d) \hookrightarrow L^{pd/(d-p)}(\mathbb{Z}^d)$ \cite{Hua}, where $\mathcal{D}_0(\mathbb{Z}^d)$ is as defined in Section \ref{sec_existence}. Using this, for any compact set $K$ in $\mathbb{Z}^d$, we obtain
$$\left(\sum_K 1 \right)^{(d-p)/d} \leq C {\mathrm{Cap}}_{1}(K)$$
for some $C>0$. Now, let $w \in L^{d/p}(\mathbb{Z}^d)$. Then, for any compact set $K$ in $\mathbb{Z}^d$,
\begin{align*}
\frac{\sum_K |w|}{\mathrm{Cap}_{1}(K)} \leq \frac{\left[\sum_K |w|^{d/p}\right]^{p/d} \left[\sum_K 1 \right]^{1-p/d}}{\mathrm{Cap}_{1}(K)} \leq C \|w\|_{L^{d/p}(\mathbb{Z}^d)} \,.
\end{align*}  
Thus, by Theorem \ref{Thm:Char}, we have $L^{d/p}(\mathbb{Z}^d) \hookrightarrow \mathcal{H}_p(\mathbb{Z}^d,1,1,0)$.
\end{remark}
%%%%%%%%%%%%%%%
%%%%%%%%%%%%%%%%%%
\section{Necessary condition for Hardy-weights}\label{KP-condition}

In this section we study necessary conditions for a function to be a Hardy weight. Above we proved a characterization of Hardy weights in terms of a norm involving the $ Q $-capacity and a positive supersolution of \eqref{e:Q'}.  Given a positive solution of minimal growth at infinity we provide a necessary condition for Hardy weights.

	\begin{definition}[Solution of minimal growth at infinity and ground state]
		{ A  function $ u $ is called  a \emph{positive solution of \eqref{e:Q'} of minimal growth at infinity}  in $X$ if $ u>0 $ and $\QQ[u]=0$   in~$X\setminus K_{0}$ for some compact $ K_{0}\Subset X $ and for any  $ v $ such that $ v>0 $ and  $\QQ[v]\geq 0$ in~$X\setminus K$  for some $ K_{0}\Subset K\Subset X $ which satisfies~$u\leq v$ on 
			$K$ one has $u\leq v$ in~$X\setminus K$.  Such a function $ u $ which satisfies $ u>0 $ and $ \QQ[u] =0$ on $ X  $ is called a \emph{global minimal positive solution}}.
	\end{definition} 
 If $\Q$ is critical in $X$, then $Q$ admits an Agmon ground state,  which is in fact a global minimal positive solution, \cite{Florian_cri}.  On the other hand, in the subcritical case, for any $x\in X$ the equation $\QQ[\varphi]=0$ admits a positive solution in $X \setminus \{x\}$ of minimal  growth at infinity in $X$. Such a function is called a {\em minimal positive Green function} of  $\Q$ in $X$ which has a positive charge at $x$, cf. \cite{Florian_cri}. Hence, in either case there is a positive solution  of \eqref{e:Q'} of minimal growth at infinity.

\begin{definition}[Null-sequence] \label{def-gs} 
	A nonnegative sequence $(\varphi_n) $ in $  C_c(X)$
	which satisfies  $ \Q(\ph_{n}) \to {0}$ and $ C^{-1} \leq \ph_{n}(o) \leq C$ for some  
	$ o\in X $ and $ C>0 $,  and for all $ n\in\N $ is called a \emph{null-sequence} for $\Q$ in $X$.
\end{definition}

{
It was shown in   \cite[Theorem 4.1]{Florian_nonlocal} that a nonnegative functional $\Q$ is critical in $X$ if and only if $\Q$ admits a null-sequence in $X$. In this case, any null-sequence converges pointwise to the unique (up to a multiplicative constant) positive solution of the equation $Q'[\varphi]=0$ in $X$, that is to the Agmon ground state.}

 In \cite[Theorem 3.1]{KP} the authors proved, in the continuum case,  a necessary condition for $|g|$ to be a Hardy-weight. Later in \cite{Das_Pinchover1} it is shown that the above result follows directly from the Maz'ya-type characterization proved therein. Similarly, we show here that a corresponding necessary condition on graphs follows directly from Theorem~\ref{Thm:Char}. 
 \begin{theorem}[Necessary condition]\label{Thm:KP} Assume that $Q$ is subcritical in $X$ and let  $ u  $ be a  positive solution of minimal growth at  infinity in $X$.
 	Then
 	\begin{align*}
 		\H \subseteq  \ell^{1}(X,\sol{u}^{p}m ) 
 	\end{align*}
 	  or equivalently if $g \in \H$, then  $  u\in \ell^{p}(X,|g|m) $. 
\end{theorem}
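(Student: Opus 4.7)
The plan is to derive the integrability from the Maz'ya-type characterization in Theorem~\ref{Thm:Char}. Given $g\in\H$, Theorem~\ref{Thm:Char} will provide, for every compact $K\Subset X$,
\begin{align*}
	\sum_{K} m u^{p}|g| \;\le\; \|g\|_{\H,u}\,\cp(K) \;\le\; \|g\|_{\H}\,\cp(K).
\end{align*}
First I would fix any exhaustion $K_n\uparrow X$ by finite sets and let $n\to\infty$; by monotone convergence applied to the left-hand side, the problem will reduce to the uniform capacity bound $\sup_n \cp(K_n)<\infty$.

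To obtain this uniform bound, I would construct explicit admissible competitors for $\cp(K_n)$. Choose cutoffs $\eta_n\in C_c(X)$ with $0\le \eta_n\le 1$ and $\eta_n\equiv 1$ on $K_n$; then $\eta_n u\in C_c(X)$ satisfies $\eta_n u\ge 1_{K_n} u$, so by Definition~\ref{Def:Cap} one has $\cp(K_n)\le \Q(\eta_n u)$. Applying the simplified-energy equivalence (Proposition~\ref{Prop:simp_energy}) with the strictly positive $u$ and test function $\eta_n$ yields
\begin{align*}
	\Q(\eta_n u)\;\le\; C\Bigl(\E(\eta_n) + \sum_{X} m\, u\, \QQ[u]\, \eta_n^{p}\Bigr).
\end{align*}
Since $u$ is a solution of minimal growth at infinity, there exists a compact $K_0\Subset X$ with $\QQ[u]=0$ on $X\setminus K_0$; hence the second sum collapses to $\sum_{K_0} m u\, \QQ[u]\, \eta_n^p$, bounded independently of $n$ by $\sum_{K_0} m u\,|\QQ[u]|$.

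The hard part will be to bound the simplified-energy term $\E(\eta_n)$ uniformly in $n$. This is where the subcriticality of $\Q$ and the minimal-growth property of $u$ will enter together, ensuring that one can choose $\eta_n$ (as a kind of capacity potential) whose gradients live on regions where the weight $b(u\otimes u)^{p/2}$ is small enough to keep $\E(\eta_n)$ bounded --- equivalently, that $u$ admits approximations in $C_c(X)$ with uniformly bounded $\Q$-energy. Once $\sup_n \cp(K_n)\le C<\infty$ is secured, monotone convergence will yield
\begin{align*}
	\sum_X m u^p |g| \;=\; \lim_{n\to\infty} \sum_{K_n} m u^p |g| \;\le\; C\|g\|_\H \;<\;\infty,
\end{align*}
which is equivalent to $u\in \ell^p(X, |g| m)$, as claimed.
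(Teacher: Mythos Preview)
Your reduction is correct and parallels the paper's own strategy: the whole theorem does boil down to showing that $u$ admits compactly supported approximations with uniformly bounded $\Q$-energy, i.e.\ that $\sup_n \cp(K_n)<\infty$. The initial inequality $\sum_K m u^p|g|\le \|g\|_\H\,\cp(K)$ is fine even though $u$ need not be a global supersolution, since that direction of Theorem~\ref{Thm:Char} uses no property of $u$ beyond positivity.

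However, the step you label ``the hard part'' is the entire content of the theorem, and you have not supplied an argument for it. The minimal-growth property of $u$ is a \emph{comparison} statement: it says $u\le v$ whenever $v$ is a positive supersolution outside a compact set dominating $u$ on that set. By itself it gives no control over $\E(\eta_n)$, and the heuristic that ``gradients of $\eta_n$ live where $b(u\otimes u)^{p/2}$ is small'' is not available without an a~priori decay estimate on $u$ that you do not have. Capacity potentials do not help either, since their energy \emph{is} the capacity, which is what you are trying to bound.

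The paper closes this gap by a perturbation argument (Lemmas~\ref{lem:crit} and~\ref{lem:null}): fix $x\in X$, choose $c_0>0$ so that $\Q_0:=\Q-c_0 1_x$ is critical, and let $\psi$ be its Agmon ground state. Criticality furnishes a null sequence $(\psi_n)$ with $1_{K_n}\psi\le\psi_n\le\psi$ and $\Q_0(\psi_n)\to 0$; then $\Q(\psi_n)=\Q_0(\psi_n)+c_0\psi_n(x)^p$ stays bounded, giving $\sum_{K_n} m|g|\psi^p\le \|g\|_\H\,\Q(\psi_n)\le C$. Finally the minimal-growth property is used exactly once, as a comparison: since $\psi$ is a positive global solution of $\QQ_0$ (hence a supersolution of $\QQ$ outside $\{x\}$), one gets $u\le C\psi$ on all of $X$, whence $g\in\ell^1(X,u^pm)$. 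In short, the missing idea is: do not try to bound $\cp_u$ directly, but pass to the ground state of a critical perturbation, where null sequences provide the required approximants for free, and only afterwards compare $u$ with $\psi$.
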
 

%%%%%%%%%%%%%%%%%%%%%%%
We need the following two observations.
\begin{lem}\label{lem:crit}
	 Assume that $Q$ is subcritical in $X$. For all $ x \in X$, there is $ c_{0} > 0 $  such that $ \Q_{0}:=\Q-c_{0}1_{x} $ is critical. 
\end{lem}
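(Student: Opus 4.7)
The plan is to run the classical one-parameter perturbation argument from criticality theory. Set
\begin{align*}
c_{0} := \sup\{t \geq 0 \mid \Q - t\, 1_{x} \geq 0 \text{ on } C_{c}(X)\},
\end{align*}
and show that $c_{0} \in (0, \infty)$, that the supremum is attained, and that $\Q_{0} := \Q - c_{0} 1_{x}$ is critical.

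First I would verify $0 < c_{0} < \infty$. Finiteness is immediate: testing against any $\varphi \in C_{c}(X)$ with $\varphi(x) \neq 0$, the inequality $\Q(\varphi) - t\, m(x)|\varphi(x)|^{p} \geq 0$ forces $t \leq \Q(\varphi)/(m(x)|\varphi(x)|^{p})$. For the lower bound $c_{0} > 0$ I would use subcriticality via Remark \ref{rem_crit}(a), which provides a strictly positive Hardy weight $w \in \H$: there is $C>0$ with $C\sum_{X} m w |\varphi|^{p} \leq \Q(\varphi)$ for every $\varphi \in C_{c}(X)$. Discarding all but the $x$-th term on the left gives $C\, m(x) w(x)|\varphi(x)|^{p} \leq \Q(\varphi)$, hence $c_{0} \geq C w(x) > 0$. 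Moreover, for each fixed $\varphi$ the condition on $t$ is a closed halfline, so the defining set of $c_{0}$ is closed as an intersection of closed sets and the supremum is attained, which gives $\Q_{0} \geq 0$.

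Next I would establish criticality of $\Q_{0}$ by contradiction. Suppose $\Q_{0}$ is subcritical; then Remark \ref{rem_crit}(a) applied to $\Q_{0}$ yields a strictly positive $\widetilde{w} \in \H_{p}(X, m, b, c - c_{0} m(x) 1_{x})$, i.e.\ some $C'>0$ with $C'\sum_{X} m \widetilde{w} |\varphi|^{p} \leq \Q_{0}(\varphi)$. Keeping only the term at $x$ on the left gives $\varepsilon := C' \widetilde{w}(x) > 0$ and $\varepsilon\, 1_{x} \leq \Q_{0}$ as functionals on $C_{c}(X)$, whence $\Q - (c_{0} + \varepsilon) 1_{x} \geq 0$, contradicting the maximality of $c_{0}$.

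The only delicate point, which I would want to double-check rather than a conceptual obstacle, is the invocation of Remark \ref{rem_crit}(a) for $\Q_{0}$: one must confirm that the scalar perturbation $c \mapsto c - c_{0} m(x) 1_{x}$ keeps us inside the framework of \cite[Corollary 5.6]{Florian_cri} (the graph $b$ is unchanged and only the potential is modified at one vertex, so this should be immediate). Granted that, the argument is a standard two-step perturbation argument.
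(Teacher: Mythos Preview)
Your argument is correct and follows essentially the same route as the paper: define $c_{0}$ as the largest $t$ with $\Q - t\,1_{x}\ge 0$, use the existence of a strictly positive Hardy weight (Remark~\ref{rem_crit}(a), i.e.\ \cite[Corollary~5.6]{Florian_cri}) to see $c_{0}>0$, and then derive a contradiction from subcriticality of $\Q_{0}$ by producing a further strictly positive Hardy weight. Your version is simply more explicit about finiteness and attainment of the supremum; the paper's proof is the same argument in compressed form.
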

\begin{proof}
	Since $ Q $ is subcritical, there is a strictly positive Hardy-weight $ w>0 $ of $ Q $, cf. \cite[Corollary 5.6]{Florian_cri}. Hence, $\Q-w(x)1_{x}\geq 0 $. Thus, we can take $c_0>0$ as the largest constant $c$ such that  $\Q-c1_{x} \geq 0 $. If $\Q_{0}=\Q-c_{0}1_{x}$ is subcritical, then again there is a strictly positive Hardy-weight $w$ such that $ \Q_{0} -w$ is nonnegative, and therefore,  $\Q_{0}-w(x)1_{x}\geq 0 $,  contradicting  the definition of $c_0$. 
\end{proof}
\begin{lemma}\label{lem:null}
	If $ \Q $ is critical with an Agmon ground state $ \psi $, then,
	for any $ K \Subset X$ and $ \varepsilon>0 $, there is $ \ph\in C_{c}(X) $ with $1_{K}\psi\leq  \ph\leq \psi $	 such that $ Q(\ph) \leq \varepsilon$.
\end{lemma}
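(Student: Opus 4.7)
The plan is to construct $\varphi$ by truncating a null-sequence from above by $\psi$, using the fact that the null-sequence converges pointwise to $\psi$ together with the cutoff estimate in Lemma~\ref{l:cuttoff}.

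First, since $\Q$ is critical, the result cited after Definition~\ref{def-gs} provides a null-sequence $(\varphi_n)\subset C_c(X)$ with $\varphi_n\geq 0$ and $\Q(\varphi_n)\to 0$, and any such null-sequence converges pointwise to a positive multiple of the Agmon ground state. Using the $p$-homogeneity of $\Q$ in its argument, I rescale $(\varphi_n)$ by a positive constant so that $\varphi_n\to\psi$ pointwise on $X$; this preserves $\varphi_n\in C_c(X)$, $\varphi_n\geq 0$, and $\Q(\varphi_n)\to 0$.

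Since $K$ is finite, the pointwise convergence $\varphi_n\to\psi$ is automatically uniform on $K$. Hence there is $n_0$ such that for all $n\geq n_0$ one has $2\varphi_n\geq \psi$ on $K$. For such $n$ I set
\begin{equation*}
\varphi:=0\vee (2\varphi_n)\wedge \psi.
\end{equation*}
Because $\varphi_n$ has finite support and $\psi\geq 0$, this minimum vanishes off that support, so $\varphi\in C_c(X)$. By construction $0\leq \varphi\leq \psi$ on $X$, while on $K$ the inequality $2\varphi_n\geq \psi$ forces $\varphi=\psi$, yielding the bracketing $1_K\psi\leq \varphi\leq \psi$ required by the lemma.

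To estimate $\Q(\varphi)$, I apply Lemma~\ref{l:cuttoff} with the strictly positive supersolution $u=\psi$ (the ground state is actually a solution) and with test function $2\varphi_n\in C_c(X)$, obtaining a constant $C>0$ depending only on $p$ such that
\begin{equation*}
\Q(\varphi)\;=\;\Q\bigl(0\vee(2\varphi_n)\wedge \psi\bigr)\;\leq\; C\,\Q(2\varphi_n)\;=\;2^{p}C\,\Q(\varphi_n).
\end{equation*}
Since $\Q(\varphi_n)\to 0$, I pick $n$ so large that the right-hand side is at most $\varepsilon$, which completes the construction. The only subtle step is the normalization ensuring that the pointwise limit of the null-sequence is \emph{precisely} $\psi$ rather than some other positive multiple of the ground state; this follows immediately from uniqueness of the Agmon ground state up to positive scalar combined with the $p$-homogeneity of $\Q$. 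All remaining steps are direct applications of Lemma~\ref{l:cuttoff} and the definition of a null-sequence.
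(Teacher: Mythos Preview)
Your proof is correct. The key ingredients---the existence of a null-sequence converging pointwise to $\psi$ and the cutoff estimate of Lemma~\ref{l:cuttoff} applied with $u=\psi$---are exactly those the paper uses, so the approaches are essentially the same. The only difference is packaging: the paper first shows $\mathrm{Cap}_\psi(K)=0$ by testing with $\psi_n/(1-\delta)$, then invokes the equivalence $\mathrm{Cap}_\psi\asymp\widetilde{\mathrm{Cap}}_\psi$ from Lemma~\ref{lem:cap} (whose proof is precisely Lemma~\ref{l:cuttoff}) to conclude $\widetilde{\mathrm{Cap}}_\psi(K)=0$, and then extracts $\varphi$ from the definition of $\widetilde{\mathrm{Cap}}_\psi$. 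You instead construct $\varphi=(2\varphi_n)\wedge\psi$ explicitly and apply Lemma~\ref{l:cuttoff} directly. Your route is slightly more elementary because it bypasses the capacity layer; the paper's route fits more naturally into its capacity-theoretic narrative. Neither gains anything substantive over the other.
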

\begin{proof}
	For a critical $ \Q $, there is  $ \psi_{n} \in C_{c}(X)$ such that $ \Q(\psi_{n})\to 0 $ and $ \psi_{n}\to \psi $ pointwise,  \cite[Theorem 4.1]{Florian_nonlocal}. Thus, for all $ \delta>0 $, we have $ \psi_{n}\ge (1-\delta)\psi $ for large $ n $ {in $K$}. Hence, 
	\begin{align*}
		{\mathrm{Cap}_{\psi}(K)} \leq  \frac{1}{(1-\delta)^p} Q(\psi_{n}) \to 0
	\end{align*}
	for $ n\to\infty $.
	By Lemma~\ref{lem:cap}, we infer $ \widetilde { \mathrm{Cap}}_{\psi}(K)=\inf \{\Q(\ph) \mid   \ph \in C_{c}(X),\; 1_K \psi \leq \ph\leq \psi\}=0 $ and the statement follows.
\end{proof}

\begin{proof}[Proof of Theorem~\ref{Thm:KP}] Let $g \in \H$. Let $ K_{n} $ be an exhaustion of $ X $, i.e. $ K_{n}\Subset K_{n+1}\Subset X $ and $ \bigcup_{n}K_{n}= X $. Let $ x\in X $ and $ c_{0} $ as in Lemma~\ref{lem:crit} above such that $ \Q_{0}=\Q-c_{0}1_{x} $ is critical. Let $ \psi>0  $ be an Agmon ground state of $\Q_{0}$. 
By Lemma~\ref{lem:null}  above there are $ \psi_{n}\in C_{c}(X) $ with $1_{K_{n}}\psi\leq  \psi_{n}\leq \psi $	 such that $ Q_{0}(\psi_{n}) \leq 1/n$. Then, with $ C(g)=\|g\|_{\H}^{-1} $
\begin{align*}
	C(g)\sum_{K_{n}}m|g||\psi|^{p}\leq C(g)	\sum_{X}m|g||\psi_{n}|^{p}\leq Q(\psi_{n}) = \Q_{0}(\psi_{n}) + c_{0}|\psi_{n}(x)|^{p}\leq \frac{1}{n}+ c_{0}|\psi(x)|^{p}.
\end{align*}
Taking the limit $ n\to\infty $, we infer $ g\in \ell^{1}(X,\sol{\psi}^{p}m )   $. Let $ u $ be  positive solutions of minimal growth at  infinity in $X$. Let  $ K $ be a compact set containing $x$ such that $u$ is a positive solution in $X\setminus K$.   Take  $ C>0 $ such that $ u\le C \psi  $ on $ K $. {Since $\psi$ is also a positive solution} outside of $ K $, it follows that  $ u\leq C\psi  $ on $ X\setminus K $ and the statement follows.
\end{proof}

\section{Existence of minimizer}\label{sec_existence}
We provide a sufficient condition on $g$ and $c$ so that the best constant in $ C|g|\le Q $ is attained in a certain function space, i.e. there exists a minimizer. So, the first question is to introduce a space of functions in which we look for a minimizer as $ C_{c}(X) $ is clearly too small.

For a non-positive potential $ c $, the energy functional $Q$ is not a norm and not even a semi-norm, so we cannot take the closure of $ C_{c}(X) $ with respect to $ \Q $. Thus, we consider the energy functional with respect to the positive part of the potential  
$ \Q_{+}= \Q_{p,b,c_+} $ where $ c_{+}=c\vee 0 $. Clearly, we can extend $ \Q_{+} $ to
\begin{align*}
\mathcal{D}(X):= \{f \in C(X)\mid \Q_{+}(f)< \infty\}
\end{align*}
in an obvious way on which $  \Q_{+}^{1/p} $ is a semi-norm. In general, we cannot expect to extend $ \Q $ to $ \mathcal{D}(X) $ for non-positive $ c $, so $ \mathcal{D}(X) $ is too large. However, as we will prove below, that it is possible to extend  a subcritical $ \Q $ to the following subspace of $ \mathcal{D}(X) $: Denote by $ \mathcal{D}_{0}(X) $ the space of all functions $ f\in \mathcal{D}(X) $ for which there is $ (\ph_{n}) $ in $ C_{c}(X) $ such that $ \ph_{n}\to f $ with respect to  $  \Q_{+}^{1/p} $ and pointwise.

In order to extend  the energy functional $ \Q $ and  also the Hardy inequalities to $ \mathcal{D}_{0}(X) $, we first show some basic properties of $ \mathcal{D}_{0}(X)  $ in the proposition below.

\begin{proposition}[Properties of $ \mathcal{D}_{0}(X) $] \label{Prop:weak_pointwise}
	Assume $ \Q $ is subcritical. Then  $ \mathcal{D}_{0}(X) $  is an reflexive Banach space with respect to the norm $ \Q_{+}^{1/p} $, and 
	\begin{align*}
		\mathcal{D}_{0}(X)= \overline{C_{c}(X)}^{\Q_{+}^{1/p}}.
	\end{align*}
	Furthermore, weak convergence with respect to  $ \Q_{+}^{1/p} $ in $ \mathcal{D}_{0}(X) $ implies pointwise convergence. 
\end{proposition}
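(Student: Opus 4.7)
The plan is to reduce everything to one observation: under subcriticality, $\Q_+^{1/p}$ is a genuine norm on $C_{c}(X)$ which dominates every point-evaluation. Indeed, by Remark~\ref{rem_crit}(a) there exists a strictly positive Hardy-weight $w\in\H$, and since $c\leq c_+$ pointwise one has $\Q(\ph)\leq \Q_+(\ph)$, so
\begin{equation*}
C_0 \sum_{X} m w |\ph|^{p} \;\leq\; \Q(\ph) \;\leq\; \Q_+(\ph), \qquad \ph\in C_{c}(X),
\end{equation*}
for some $C_0>0$. Restricting the left-hand sum to a single vertex $x$ yields $|\ph(x)|\leq (C_0 m(x) w(x))^{-1/p}\Q_+^{1/p}(\ph)$. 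Thus every Cauchy sequence in $(C_{c}(X),\Q_+^{1/p})$ is Cauchy pointwise at each $x\in X$, and $\Q_+^{1/p}$ distinguishes points on $C_{c}(X)$.

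The next step is to identify $\mathcal{D}_0(X)$ with the abstract completion of $(C_{c}(X),\Q_+^{1/p})$. Given such a Cauchy sequence $(\ph_n)$, the pointwise-Cauchy property produces a limit $f\colon X\to\R$. Fatou's lemma applied to the sums defining $\Q_+(f)$ gives $\Q_+(f)\leq\liminf_n \Q_+(\ph_n)<\infty$, so $f\in\mathcal{D}(X)$; applying Fatou to $\Q_+(\ph_n-\ph_m)$ as $m\to\infty$ (using $\ph_n-\ph_m\to \ph_n-f$ pointwise) yields $\Q_+(\ph_n-f)\to 0$, so $f\in\mathcal{D}_0(X)$. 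The reverse inclusion is immediate from the definition of $\mathcal{D}_0(X)$, so $\mathcal{D}_0(X)=\overline{C_{c}(X)}^{\Q_+^{1/p}}$ is a Banach space. Positive-definiteness of $\Q_+^{1/p}$ on $\mathcal{D}_0(X)$ follows because any $f\in\mathcal{D}_0(X)$ with $\Q_+(f)=0$ is the pointwise-and-norm limit of some $\ph_n$ with $\Q_+(\ph_n)\to 0$, and the displayed Hardy estimate then forces $\ph_n(x)\to 0$ for every $x$.

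For reflexivity, I would use the linear isometry
\begin{equation*}
T\colon (\mathcal{D}_0(X),\Q_+^{1/p}) \longrightarrow \ell^p(X\times X,\, b/2) \oplus \ell^p(X,\, c_+), \qquad Tf=(\nabla f,\, f),
\end{equation*}
where the target carries the $p$-direct-sum norm, so that $\|Tf\|^p=\Q_+(f)$. Since $T$ is an isometry with complete domain its image is closed, and each factor is an $\ell^p$-space with $1<p<\infty$, hence uniformly convex by Clarkson's inequalities and reflexive by the Milman--Pettis theorem; reflexivity passes to finite direct sums and to closed subspaces, so $\mathcal{D}_0(X)$ is reflexive. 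Finally, the estimate $|\ph(x)|\leq (C_0 m(x) w(x))^{-1/p}\Q_+^{1/p}(\ph)$ extends by density to all $f\in\mathcal{D}_0(X)$, making each point-evaluation $\delta_x$ a continuous linear functional; hence weak convergence $f_n\rightharpoonup f$ in $\mathcal{D}_0(X)$ implies $f_n(x)=\delta_x(f_n)\to\delta_x(f)=f(x)$ for every $x\in X$.

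The only delicate point is the identification step in the second paragraph, where one must ensure that abstract Cauchy sequences in the completion have genuine pointwise limits and that these are also norm limits. This is exactly what the Hardy estimate in the first paragraph provides; everything else is a routine assembly from Fatou's lemma and standard Banach-space theory.
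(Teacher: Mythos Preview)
Your proof is correct and follows essentially the same approach as the paper's. The paper packages the first two paragraphs slightly differently, observing that $\Q_+^{1/p}$ and $(\Q_+ + g)^{1/p}$ are equivalent seminorms (the latter obviously a norm controlling pointwise convergence), whereas you argue directly from the Hardy bound and Fatou's lemma; the reflexivity argument via the isometry into the $\ell^p$-product and the continuity of point evaluations are the same in both.
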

\begin{proof}
 Since $\Q $ is subcritical,  there is $0< g\in \H $ by  \cite[Corollary~5.6]{Florian_cri} such that $ g\leq \Q $ on $ C_{c}(X) $.
Thus, on $ C_{c}(X) $
\begin{align*}
	\Q_{+} \leq \Q_{+}+g \leq \Q_{+}+ \Q\le  2\Q_{+}.
\end{align*}
Hence, $  Q_{+}^{1/p}$ and   $ (Q_{+}+g)^{1/p} $ are equivalent as semi-norms. Due to $ g>0 $, we can infer that   $ (Q_{+}+g)^{1/p} $ is a norm, and convergence with respect to $ (Q_{+}+g)^{1/p}  $ implies pointwise convergence. Hence, these properties carry over to $  Q_{+}^{1/p}$. We  observe that the closure of $ C_{c}(X) $  under  $ (Q_{+}+g)^{1/p} $  is a subspace of $ \mathcal{D}(X)\cap \ell^{p}(X,m|g|) $. Then, by the equivalence of the norms $  Q_{+}^{1/p}$ and   $ (Q_{+}+g)^{1/p} $ and the definition of $ \mathcal{D}_{0}(X) $ 
\begin{align*}
 \mathcal{D}_{0}(X) \supseteq  \overline{C_{c}(X)}^{(\Q_{+}+g)^{1/p}}= \overline{C_{c}(X)}^{\Q_{+}^{1/p}}	\supseteq \mathcal{D}_{0}(X).
\end{align*}
This means that $ \mathcal{D}_{0}(X) $ is the closure of $ C_{c}(X) $ with respect to $  Q_{+}^{1/p}$.

\eat{We first show that  $ \Q_{+}^{1/p} $ is positive definite on $ \mathcal{D}_{0}(X) $  which implies that it is a norm.
	Let $ f\in \mathcal{D}_{0}(X) $ such that  $ \Q_+(f)=0 $. Since $Q_{+}\ge Q\geq 0$, it follows that $Q(f)=0$.     {We pick} $ (f_{n}) $ in $ C_{c}(X) $ such that $ f_{n}\to f $ {in $\mathcal{D}_{0}(X) $}. Since $ \Q $ is subcritical,  there is $0< g\in \H $ by  \cite[Corollary~5.6]{Florian_cri} such that $ g\leq \Q $ on $ C_{c}(X) $. Hence, we observe
\begin{align*}
|f_{n}(x)|^{p}m(x)g(x)\leq Q(f_{n})\to0
\end{align*}
which implies $ f=0 $.  This shows that $ \Q_{+}^{1/p} $ is a norm on $ \mathcal{D}_{0}(X) $.
		
Next, we show that convergence with respect to 	$ \Q_{+}^{1/p} $  in $ \mathcal{D}_{0}(X) $ implies pointwise convergence. 
Let $ f\in \mathcal{D}_{0}(X) $ and $ f_{n} \in C_{c}(X)$  such that $\Q_{+}( f_{n}- f)\to 0 $ as $ n\to\infty $. 	Then, $ (f_{n}) $ is a $  \Q_{+}^{1/p} $ Cauchy-sequence.
Hence,  for $ x\in X ,$ and  $ k,n\in\N$ 
	$$  |f_{k}(x)-f_{n}(x)|^{p}m(x)g(x)\leq \sum_{X}mg |f_{n}-f_{k}|^{p} \leq \Q(f_{k}-f_{n})\leq Q_{+}(f_{k}-f_{n}), $$ 
	and the right hand side becomes small as $k$ and $n$ tend to $\infty$. Thus, $ (f_{n}) $ converges pointwise to some function $ h $. By Fatou's Lemma, we infer
	\begin{align*}
	0\le 	Q_{+}(f-h)\leq\liminf_{n\to\infty}Q_{+}(f-f_{n})=0
 	\end{align*}
	which implies $ h\in \mathcal{D}(X) $. Now since $ f_{k}\to  f$ with respect  $ \Q_{+}^{1/p} $, we have
	using Fatou's Lemma again and that $ (f_{n}) $ is a $ \Q_{+}^{1/p} $-Cauchy-sequence
		\begin{align*}
		0\le 	Q_{+}(f-h)=\lim_{k\to\infty}Q_{+}(f_{k}-h)\leq\lim_{k\to\infty}\liminf_{n\to\infty}Q_{+}(f_{k}-f_{n})=0.
	\end{align*}
Thus, $ f_{k}\to  h$ with respect  $ \Q_{+}^{1/p} $ which implies  $h\in  \mathcal{D}_{0}(X) $. As 
$ \Q_{+}^{1/p} $ is a norm in $ \mathcal{D}_{0}(X) $  we have $ f=h $. 	 Hence,  $f_{n}\to f $ pointwise.}

To see that  $ \mathcal{D}_{0}(X) $ is a  reflexive Banach space, observe that the map 
	\begin{align*}
		\mathcal{D}_{0}(X) \ra \ell^p(X\times X, b) \times \ell^p(X,c_{+}),\qquad \ph\mapsto(\nabla\varphi,\varphi)
	\end{align*}
	is an isometry and, therefore, $ \mathcal{D}_{0}(X) $ as a {closed subspace},  inherits the reflexivity of the Banach space $\ell^p(X\times X, b) \times \ell^p(X,c_{+})$.  
	
Finally, we show that weak convergence with respect to $ Q_{+}^{1/p} $ implies pointwise convergence. Recall that weak convergence means convergence under linear functionals which are continuous with respect to $ Q_{+}^{1/p} $. For $ x\in X $ and $ g>0 $ with $ g\leq \Q $,	the linear functional  $$  \Psi_{x}: \mathcal{D}_{0}(X)\to \R,\quad \ph\mapsto m(x)g(x)\ph(x)  $$ is  continuous in  $ \mathcal{D}_{0}(X) $: Indeed, by H\"older inequality with $ \frac{1}{p}+\frac{1}{q}=1$, we obtain
	\begin{align*}
		\Psi_{x}(\ph)=\!\sum_{X}mg1_{x}\ph \leq \left(\sum_{X}mg|1_{x}|^{q} \right)^{\frac{1}{q}}
		\left(\sum_{X}mg|\ph|^{p} \right)^{\frac{1}{p}}  \leq \Q(1_{x})^{{\frac{1}{q}}}\Q(\ph)^{\frac{1}{p}}\le  
		\Q_{+}(1_{x})^{{\frac{1}{q}}}\Q_{+}(\ph)^{\frac{1}{p}}.
	\end{align*}	
So, weak convergence of a sequence $ (f_{n}) $ to $ f $ in $ \mathcal{D}_{0} $ with respect to $ Q^{1/p}_{+} $, in particular implies convergence $  \Psi_{x}(f_n) \to \Psi_{x}(f) $  as $ \Psi_{x} $ is  a continuous functional for every $ x\in X $. Since $ mg>0 $, this implies pointwise convergence $ f_{n}\to f $.
\end{proof}

With these facts at hand, we can extend 
 the Hardy inequalities $ C|g|\le \Q $ for $ g\in\mathcal{H} $ from $ C_{c} (X)$ to $ \mathcal{D}_{0}(X) $. In particular, this means that we can extend $ \Q $ to $ \mathcal{D}_{0}(X) $. 
 %\Hmm{We do not show that $ Q(f_{n}) \to Q(f)$ for $ Q_{+} %$-Cauchy sequences. To this end we would need that $ \sum %c_-|f_{n}|^{p}\to \sum c_{-}|f|^{p} $. However, I do not see why %we have more than Fatous lemma. Fortunately, we do not seem to %need more than the Hardy inequality.}

\begin{proposition}[Extension of Hardy inequality to $ \mathcal{D}_{0}(X) $]\label{p:HardyOnD0}	Assume {that} $ \Q $  is subcritical. If $ f\in \mathcal{D}_{0}(X) $, then $ \sum_{X}c|f|^{p} $ converges absolutely and 
\begin{align*}	\Q(f):=\frac{1}{2}\displaystyle \sum_{X\times X}  b|\nabla f|^p + \sum_{ X} c|f|^p
\end{align*}	
is finite. Furthermore, for
 $ g\in \H $ and $ C>0 $ with $C |g|\leq \Q$ on $ C_{c}(X) $,   we have
\begin{align*}
	C\sum_{X}m|g||f|^{p}\leq \Q(f),\qquad f\in \mathcal{D}_{0}(X).
\end{align*}
\end{proposition}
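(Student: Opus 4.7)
The plan is to approximate $f\in\mathcal{D}_0(X)$ by a sequence $(\varphi_n)\subset C_c(X)$ with $\varphi_n\to f$ in the $Q_+^{1/p}$-norm and, by Proposition~\ref{Prop:weak_pointwise}, also pointwise. The whole difficulty is that the functional $\Q$ contains the sign-changing term $\sum c|\varphi|^p$, so Fatou's lemma cannot be applied directly. The key trick will be to move the negative part $c_-$ to the other side so that every quantity becomes nonnegative.

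First I would show that $\sum_X c|f|^p$ is absolutely summable. For $\varphi\in C_c(X)$ the assumption $\Q\ge 0$ immediately gives
\begin{align*}
\sum_X c_-|\varphi|^p \;\le\; \tfrac{1}{2}\sum_{X\times X}b|\nabla\varphi|^p+\sum_X c_+|\varphi|^p \;=\; Q_+(\varphi).
\end{align*}
Applying this to $\varphi_n$ and using Fatou's lemma together with the pointwise convergence $\varphi_n\to f$, one obtains
$\sum_X c_-|f|^p \le \liminf_n Q_+(\varphi_n)=Q_+(f)$,
where the equality is just norm convergence in $\mathcal{D}_0(X)$. Since $f\in\mathcal{D}(X)$ ensures $Q_+(f)<\infty$, both $\sum c_\pm|f|^p$ and $\sum b|\nabla f|^p$ are finite, so $\sum_X c|f|^p$ converges absolutely and $Q(f)=Q_+(f)-\sum_X c_-|f|^p$ is a finite real number.

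For the Hardy inequality, given $g\in \H$ with $C|g|\le \Q$ on $C_c(X)$, the plan is to write
\begin{align*}
C\sum_X m|g||\varphi_n|^p + \sum_X c_-|\varphi_n|^p \;\le\; Q(\varphi_n)+\sum_X c_-|\varphi_n|^p \;=\; Q_+(\varphi_n).
\end{align*}
The left-hand side is now a sum of two nonnegative quantities, so applying Fatou's lemma termwise (using pointwise convergence $\varphi_n\to f$) and norm convergence $Q_+(\varphi_n)\to Q_+(f)$ on the right gives
\begin{align*}
C\sum_X m|g||f|^p+\sum_X c_-|f|^p \;\le\; Q_+(f).
\end{align*}
Rearranging and recognizing $Q(f)=Q_+(f)-\sum_X c_-|f|^p$ yields the desired inequality $C\sum_X m|g||f|^p\le Q(f)$.

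The main obstacle is precisely this sign-indefiniteness of $c$: one cannot take $\liminf$ inside $Q(\varphi_n)$ because the negative part of $c$ obstructs any monotone or dominated convergence argument. The device of absorbing $c_-$ into $Q_+$ on both sides reduces everything to Fatou on nonnegative sums and to norm continuity of $Q_+$, both of which are available since $\mathcal{D}_0(X)$ is a Banach space under $Q_+^{1/p}$ by Proposition~\ref{Prop:weak_pointwise}. Subcriticality of $\Q$ enters only implicitly, through the existence of the approximating sequence $\varphi_n$ granted by that proposition.
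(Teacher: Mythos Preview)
Your proof is correct and follows essentially the same approach as the paper: rewrite the Hardy inequality on $C_c(X)$ as $\sum_X m(C|g|+c_-/m)|\varphi|^p \le Q_+(\varphi)$, then pass to the limit using Fatou's lemma on the nonnegative left side and norm convergence of $Q_+(\varphi_n)$ on the right, and finally rearrange using $Q=Q_+-c_-$. The only minor remark is that pointwise convergence of the approximating sequence is already built into the definition of $\mathcal{D}_0(X)$, so you need not invoke Proposition~\ref{Prop:weak_pointwise} for that step.
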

\begin{proof} 
By $C |g|\leq \Q$ on $ C_{c}(X) $, we have 
		\begin{align*}
		\sum_{X}m(C|g|+c_{-})|\ph|^{p}\leq \Q_{+}(\ph),\qquad \ph\in C_{c}(X).
	\end{align*}
Hence, by definition of $ \mathcal{D}_{0}(X) $, Fatou's Lemma and the proposition above this inequality extends to $ \mathcal{D}_{0}(X) $ which yields the statement as $ \Q=\Q_{+}-c_{-} $.
\end{proof}
%%%%%%%%%%%%%%%%%%%%%%%%
We introduce another semi-norm which can {be  considered} as the $ \|\cdot\|_{\H} $-norm at the ``boundary'' of $ X $, though
 we do not specify it as an object itself. For $ g\in \H $, let
$$  \|g\|_{\H,\partial X} =\inf_{K\Subset X} \sup_{\ph\in C_{c}(X\setminus K),\Q(\ph)\neq 0} \frac{\sum_{X}m|\ph|^{p}|g|}{\Q(\ph)}.  $$
{If $|g|>0$,} then $$  {\lambda_0(g)}:=\frac{1}{\|g\|_{\H}} $$ has a spectral meaning as it can be interpreted as the infimum of $ \Q $ over $ C_{c}(X) $ functions normalized in $ \ell^{p} (X,|g|m)$. Hence, $ \|g\|_{\H} ^{-1}$ can be related to the  bottom of the spectrum of some operator in $ \ell^{p} (X,|g|m)$ which we do {not} specify because we will not use this operator  in what follows. Similarly, 
$$\lambda_{\infty}(g)=  \frac{1}{\|g\|_{\H,\partial X}}  $$ can be seen to be related to the   bottom of the essential spectrum  in $ \ell^{p} (X,|g|m)$.

Hence, we say that $ g $ admits a \emph{spectral gap} if  $ \lambda_{0}(g)<  \lambda_{\infty}(g)  $.
Below we show that under an integrability assumption on $c_-$, a spectral gap  implies that $ \|g\|_{\H} $ is attained in $ \mathcal{D}_{0}(X) $.

\begin{theorem}[Existence of minimizer] \label{Thm:best_constant} 
	Let $g \in \H$ be such that $  \lambda_{0}(g)<  \lambda_{\infty}(g)   $. Assume that for some  $K\Subset X$ there exists  a positive solution $u$ of $ (\QQ-\lambda_{0}(g)|g|)[\varphi]=0 $ in $X\setminus K$ such  that  $  c_{-}\in \ell^{1}(X,|u|^p) $. Then,  $ \|g\|_{\H} $ is attained in $\D_{0}(X)$, i.e. there is $ f\in \D_{0}(X) $ such that $ \|g\|_{\H}\Q(f)=\sum_{X}m|g| |f|^{p} $.
\end{theorem}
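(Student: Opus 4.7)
The plan is to run the direct method of calculus of variations in the reflexive Banach space $\mathcal{D}_{0}(X)$. Pick a minimising sequence $(\varphi_n)\subset C_c(X)$ with $\sum_X m|g||\varphi_n|^p=1$ and $\Q(\varphi_n)\to\lambda_0(g)=1/\|g\|_{\H}$; by Lemma~\ref{lem:contr} we may assume $\varphi_n\ge 0$. The first step is to show that $(\varphi_n)$ is bounded in $\mathcal{D}_{0}(X)$ with respect to $\Q_{+}^{1/p}$. Since $\Q(\varphi_n)$ is already bounded, this reduces to a bound on $\sum_X m c_-|\varphi_n|^p$. Choosing $\lambda\in(\lambda_0(g),\lambda_\infty(g))$, the definition of $\lambda_\infty(g)$ provides $K\Subset X$ so that $\lambda\sum_X m|g||\varphi|^p\le\Q(\varphi)$ for all $\varphi\in C_c(X\setminus K)$. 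Splitting $\varphi_n$ into a piece supported in a neighbourhood of $K$ and a piece supported in $X\setminus K$ via Lemma~\ref{l:cuttoff}, the outside piece is controlled by this Persson-type inequality, while the inside piece lives on a finite set and is controlled by the Hardy inequality applied to point evaluations. The hypothesis $c_-\in\ell^1(X,|u|^p)$, after comparing $\varphi_n$ with the positive solution $u$ outside $K$ through Proposition~\ref{Prop:simp_energy}, supplies a summable envelope that makes these bounds effective.

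Next, reflexivity of $\mathcal{D}_{0}(X)$ (Proposition~\ref{Prop:weak_pointwise}) yields a subsequence that converges weakly, and hence pointwise by the same proposition, to some $f\in\mathcal{D}_{0}(X)$. Weak lower semicontinuity of $\Q_{+}^{1/p}$, combined with dominated convergence for $\sum_X m c_-|\varphi_n|^p$ against the $\ell^1$-envelope from the previous step, gives $\Q(f)\le\lambda_0(g)$; Fatou's lemma for the density $m|g|$ gives $\sum_X m|g||f|^p\le 1$.

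The main obstacle is ruling out loss of mass at infinity, that is, proving $\sum_X m|g||f|^p=1$. Suppose for contradiction that $\alpha:=\sum_X m|g||f|^p<1$ and set $\psi_n:=\varphi_n-f$. A Brezis--Lieb-type splitting (routine for the gradient $p$-energy, and for the potential part justified by the envelope for $c_-$ together with weak lower semicontinuity for $c_+$) yields
\begin{align*}
\Q(\varphi_n)=\Q(f)+\Q(\psi_n)+o(1),\qquad
\sum_X m|g||\varphi_n|^p=\sum_X m|g||f|^p+\sum_X m|g||\psi_n|^p+o(1).
\end{align*}
Multiplying $\psi_n$ by a sequence of cutoffs vanishing on a growing exhaustion of $X$, which by Lemma~\ref{lem:contr} and Lemma~\ref{l:cuttoff} costs only $o(1)$ in both quantities, produces test functions admissible for the definition of $\lambda_\infty(g)$. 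Passing to the limit gives
\begin{align*}
\lambda_\infty(g)(1-\alpha)\le \lambda_0(g)-\Q(f)\le \lambda_0(g)(1-\alpha),
\end{align*}
contradicting the spectral gap unless $\alpha=1$. Therefore $\sum_X m|g||f|^p=1$, which together with $\Q(f)\le\lambda_0(g)$ and the Hardy inequality of Proposition~\ref{p:HardyOnD0} forces $\Q(f)=\lambda_0(g)$, so $\|g\|_{\H}\Q(f)=\sum_X m|g||f|^p$ and $f$ is the required minimiser. The most delicate step in this plan is the Brezis--Lieb splitting together with the admissibility of the tails $\psi_n$ as competitors for $\lambda_\infty(g)$; both rely essentially on the contraction properties of the simplified energy developed earlier and on the integrability of $c_-$ against $|u|^p$.
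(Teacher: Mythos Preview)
Your approach via the direct method and concentration compactness is genuinely different from the paper's, which instead goes through criticality theory: Lemma~\ref{lem-spectral-gap} shows that the spectral gap forces $\Q-\lambda_0(g)|g|$ to be critical, so it admits an Agmon ground state $\psi$; since $\psi$ is a global minimal positive solution one has $\psi\le Cu$, and a null-sequence $(\psi_n)$ with $0\le\psi_n\le\psi$ (Proposition~\ref{Prop:null_seq}) then provides the pointwise envelope $|\psi_n|\le Cu$ directly. From there $\Q_+(\psi_n)$ is bounded, weak compactness places $\psi$ in $\mathcal D_0(X)$, and Fatou finishes. No Brezis--Lieb, no spatial cutoffs.

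Your sketch, by contrast, has a real gap at the very first step: you never establish a uniform bound on $\sum_X c_-|\varphi_n|^p$, and hence not on $\Q_+(\varphi_n)$. The sentence ``comparing $\varphi_n$ with the positive solution $u$ outside $K$ through Proposition~\ref{Prop:simp_energy}'' does not do this: the simplified-energy estimate relates $\Q(u\psi)$ to $E_u(\psi)$ but gives no pointwise inequality $\varphi_n\le Cu$, which is what you would need to invoke $c_-\in\ell^1(X,|u|^p)$ as a dominating envelope. Likewise, Lemma~\ref{l:cuttoff} is a level-set truncation $\varphi\mapsto 0\vee\varphi\wedge u$ for a \emph{global} supersolution, not a spatial decomposition into pieces supported near and away from $K$; you invoke it twice (for the $\Q_+$ bound and for making the tails $\psi_n$ admissible for $\lambda_\infty$) in ways it cannot support. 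On general, possibly non-locally finite graphs there are no cheap spatial cutoffs, so the step ``multiplying $\psi_n$ by a sequence of cutoffs \dots\ costs only $o(1)$'' is unjustified.

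In short, the concentration-compactness route is not hopeless, but the key comparison $\varphi_n\lesssim u$ that drives both your $\Q_+$ bound and the dominated-convergence/Brezis--Lieb arguments is missing; the paper obtains exactly this comparison for free by working with the ground state $\psi$ rather than an arbitrary minimising sequence.
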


We begin  with a key lemma, claiming that if the  above spectral gap condition  is satisfied for a Hardy-weight $g$, then $\Q-\lambda_{0}(g){|g|}$ is critical.  The proof is similar to the proof of \cite[Lemma~2.3]{Lamberti_Pinchover}, {see also the references therein}. %We denote the closure  of a subset $ A\subseteq \R $ by $ \mathrm{cl}(A) $.
%%%%%%%%%%%%%%%%%%%%%
\begin{lemma}[Spectral gap implies criticality]\label{lem-spectral-gap} 
	Let $g \in \H$ be such that $ \lambda_{0}(g)<  \lambda_{\infty}(g)$.   Then  $\Q-\lambda_{0}(g){|g|}$ is critical in $X$. 
\end{lemma}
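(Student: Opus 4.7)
My plan is to argue by contradiction using a concentration--compactness type analysis on a minimizing sequence. Suppose $\Q_g := \Q - \lambda_0(g)|g|$ were subcritical. Then by Remark~\ref{rem_crit}(a) there exists a strictly positive $w\in \H(\Q_g)$, yielding $\delta>0$ with
\[
\Q(\varphi) \ge \lambda_0(g)\sum_{X} m|g||\varphi|^p + \delta \sum_{X} mw|\varphi|^p, \qquad \varphi \in C_c(X). \qquad \mathrm{(I)}
\]
Using $\lambda_0(g) < \lambda_\infty(g)$, I fix some $\mu \in (\lambda_0(g), \lambda_\infty(g))$; the definition of $\|g\|_{\H,\partial X}$ then gives $K_0 \Subset X$ with
\[
\Q(\varphi) \ge \mu \sum_{X} m|g||\varphi|^p, \qquad \varphi \in C_c(X \setminus K_0). \qquad \mathrm{(II)}
\]

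Next I choose a nonnegative minimizing sequence $\varphi_n \in C_c(X)$ with $\sum_{X} m|g||\varphi_n|^p = 1$ and $\Q(\varphi_n) \to \lambda_0(g)$; replacing $\varphi_n$ by $|\varphi_n|$ is allowed because $\Q(|\varphi|)\le \Q(\varphi)$. From (I), $(\varphi_n)$ is bounded in $\mathcal{D}_0(X)$, so by Proposition~\ref{Prop:weak_pointwise} a subsequence converges weakly and pointwise to some $\varphi_* \in \mathcal{D}_0(X)$. Set $\psi_n := \varphi_n - \varphi_* \to 0$ pointwise and $a^p := \sum_{X} m|g||\varphi_*|^p \in [0,1]$. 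A Brezis--Lieb type identity in $\ell^p(X,m|g|)$ gives $\sum_{X} m|g||\psi_n|^p = 1 - a^p + o(1)$, and I will need a convexity-based energy splitting $\Q(\varphi_n) \ge \Q(\varphi_*) + \Q(\psi_n) + o(1)$. Since $\psi_n \to 0$ pointwise on the finite set $K_0$, a cutoff $\eta_n$ equal to $1$ outside $K_0$ and suitably vanishing on $K_0$ lets me replace $\psi_n$ by $\eta_n\psi_n \in C_c(X\setminus K_0)$ while losing only $o(1)$ in the energy, after which (II) yields $\Q(\psi_n) \ge \mu(1-a^p) + o(1)$. Extending (I) to $\mathcal{D}_0(X)$ via Proposition~\ref{p:HardyOnD0} gives $\Q(\varphi_*) \ge \lambda_0(g) a^p + \delta \sum_{X} mw|\varphi_*|^p$. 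Combining everything,
\[
\lambda_0(g) \;=\; \lim \Q(\varphi_n) \;\ge\; \lambda_0(g) a^p + \delta \sum_{X} mw|\varphi_*|^p + \mu(1-a^p),
\]
which strictly exceeds $\lambda_0(g)$ whether $a = 0$ (via $\mu > \lambda_0(g)$) or $a > 0$ (via $\delta\sum_{X} mw|\varphi_*|^p > 0$), a contradiction, so $\Q_g$ must be critical.

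The main obstacle is the nonlinear energy-splitting $\Q(\varphi_n) \ge \Q(\varphi_*) + \Q(\psi_n) + o(1)$ together with the cutoff reduction $\Q(\eta_n\psi_n) \le \Q(\psi_n) + o(1)$; both are routine for $p = 2$ via IMS-type identities but delicate for the general nonlinear discrete $p$-energy, whose gradient is nonlocal and lacks a clean Leibniz decomposition. My intended remedy is to work with the simplified energy $E_u$ associated to a strictly positive supersolution $u$ of $\Q_g'[u]=0$ (whose existence is guaranteed by the subcriticality of $\Q_g$ via the Agmon--Allegretto--Piepenbrink theorem), exploiting its two-sided equivalence with $\Q$ (Proposition~\ref{Prop:simp_energy}) and its compatibility with normal contractions (Lemma~\ref{lem:contr} and the cutoff Lemma~\ref{l:cuttoff}), so that the cutoff and splitting reduce to term-by-term manipulations on $S_u$, where the integrand is nonnegative and pointwise-monotone in $\varphi$.
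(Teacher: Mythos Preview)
Your approach is genuinely different from the paper's. The paper does \emph{not} use a minimizing sequence or any concentration--compactness machinery; instead it runs a soft convexity argument in the spirit of \cite{Lamberti_Pinchover}: one picks $\lambda_1\in(\lambda_0(g),\lambda_\infty(g))$, finds a compactly supported $w\ge 0$ with $\Q-\lambda_1|g|+w\ge 0$ on $C_c(X)$, shows that the function $s(\lambda):=\inf\{s: \Q-\lambda|g|+sw\ge0\}$ is convex with $s(\lambda_0)=0$, and then invokes a criticality criterion for one-parameter perturbations (Fischer's thesis, Corollary~11.5) to conclude that $\Q-\lambda_0(g)|g|$ is critical. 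This avoids all compactness issues entirely.

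Your proposal has two genuine gaps. First, the sentence ``From (I), $(\varphi_n)$ is bounded in $\mathcal{D}_0(X)$'' is unjustified. Inequality (I) together with $\Q(\varphi_n)\to\lambda_0(g)$ yields only $\sum_X mw|\varphi_n|^p\to 0$, hence pointwise convergence $\varphi_n\to 0$; it says nothing about $\Q_+(\varphi_n)$, which is the norm on $\mathcal{D}_0(X)$. Controlling $\sum_X c_-|\varphi_n|^p$ from boundedness of $\Q(\varphi_n)$ alone is not automatic for sign-changing potentials---this is exactly why Theorem~\ref{Thm:best_constant} carries the extra hypothesis $c_-\in\ell^1(X,|u|^p)$, a hypothesis the present lemma does \emph{not} have. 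Without $\Q_+$-boundedness, your Brezis--Lieb splitting and the cutoff step (both of which require uniform control on $\sum_y b(x,y)|\varphi_n(y)|^p$ for $x\in K_0$, and the graph need not be locally finite) cannot be carried out.

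Second, your proposed remedy via the simplified energy $E_u$ cannot rescue a sharp-constant argument. Proposition~\ref{Prop:simp_energy} gives only $C_1\Q\le E_u+\,(\text{supersolution term})\le C_2\Q$ with $C_1<C_2$; passing through $E_u$ therefore loses a multiplicative factor $C_1/C_2<1$, which destroys the chain $\lambda_0(g)=\lim \Q(\varphi_n)\ge \lambda_0(g)a^p+\delta\!\sum mw|\varphi_*|^p+\mu(1-a^p)$ that your contradiction relies on. Lemma~\ref{l:cuttoff} is likewise an inequality with a constant, not an asymptotic identity, so it does not give the ``$+\,o(1)$'' cutoff reduction you need. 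In short, the simplified energy is well suited to proving \emph{qualitative} statements (as in Section~\ref{sec_Mazya}) but not to tracking the exact value $\lambda_0(g)$.
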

\begin{proof}
We set
	\begin{align*}
	S &:=\{t\in {\mathbb{R}}\mid (\Q-t|g|)\geq 0 \mbox{ on } C_{c}(X) \}, \\
	S_{\infty} &:=\{  t\in {\mathbb{R}} \mid (\Q -t|g|)\geq 0 \mbox{ on }  C_{c} (X \setminus K) \mbox{ for some compact set } K  \subset X  \}.
	\end{align*}
	Clearly, $S$ and $S_\infty$ are  intervals and since $g$ admits  a spectral gap, it follows that
	$$S=\ (\!-\infty ,\lambda_{0}(g)] \ \varsubsetneq S_{\infty} \subseteq  \ (-\infty ,  \lambda_{\infty}(g) ],  \mbox{ and } 0<\lambda_{0}(g)<\lambda_{\infty}(g) .$$
	To simplify notation, we set $\lambda_0=\lambda_0(g)$. 
	
	Let $ \lambda_{1}\in S_{\infty}\setminus S $. Then there is a compact set $K\neq \emptyset$ such that  $ (\Q-\lambda_{1}|g| )\ge 0 $ on $ C_{c}(X\setminus K) $.  Let 
	$$  w=  (\|(c_{-}/m)1_{K}\|_{\infty}+\lambda_{1}\|g1_{K}\|_{\infty}+1)1_{K}$$
which is compactly supported in $ K $ and nontrivial. We observe that for $ \ph\in C_{c}(X) $ 
	\begin{multline*}
		(\Q-\lambda_{1}|g|+w)(\ph)= (\Q-\lambda_{1}|g|)(\ph 1_{X\setminus K}) +\frac{1}{2}\sum_{K\times K}b|\nabla \ph|^{p}+\sum_{K\times X\setminus K}b|\nabla \ph|^{p}+\sum_{K}c_{+}|\ph |^{p}\\+\sum_{K} m(w-\frac{c_{-}}{m}-\lambda_{1}|g|)|\ph|^{p}\ge 0.
	\end{multline*} 
	Consequently,   $(Q-\gl|g|+w	 )\geq 0$ on $C_{c}(X)$  for all  $ \lambda \in  [\lambda_{0},\lambda_{1}]  $.

	For $ \lambda \in  [\lambda_{0},\lambda_{1}]  $, let
	$$  s(\lambda) := \inf\{ s\in\R\mid (Q-\gl|g|+ sw	 )\geq 0 \mbox{ on  }C_{c}(X)\} . $$
	By the discussion above,   $ s(\lambda)\leq 1$ for $\lambda\in [\lambda_{0},\lambda_{1}]$.
	
	If $ s(\lambda)\leq 0 $, then 
	\begin{align*}
		(Q-\gl|g|	)\geq (Q-\gl|g|+ s(\lambda)w	 )\geq 0\quad \mbox{ on  }C_{c}(X).
	\end{align*}
	Hence,  $ \lambda\in S $ and since  $S=(-\infty,\lambda_{0}] $ as well as $ \lambda\ge \lambda_{0} $ we infer $ \lambda=\lambda_0 $. We conclude $ s(\lambda)>0 $ for $ \lambda>\lambda_0 $.
	
	Furthermore,
	$$\{(\lambda,s)\in [\lambda_0,\lambda_1]\times \R \mid (Q-\gl|g|+sw)\geq 0 \mbox{ in } X\}$$ 
	is obviously convex 	  
	which implies that the function $ s $ is convex. Since, $ s(\lambda)\in (0,1] $ for $ \lambda>\lambda_0 $ as shown above and $  s(\lambda_{0})\le 0$  since $ \Q -\lambda_{0}|g|\ge 0 $, convexity of $ s $ implies $$  s(\lambda_{0})=0 . $$
	
	Now, we use this to show criticality of $  \Q -\lambda_{0}|g|$.  	{We employ \cite[Corollary~11.5]{F_Diss} with $ V=X $,  $ h=Q-\lambda_{0} |g|+w $ which is clearly subcritical and  $ \tilde w =- w $ which is clearly negative somewhere. According to  \cite[Corollary~11.5]{F_Diss}, $Q-\lambda |g|+s(\gl)w$
	is critical for $\lambda\in [\lambda_0,\lambda_1]$. In particular, since $  s(\lambda_{0})=0$, we obtain that 
	$Q-\lambda_{0} |g|$ is critical.}
\end{proof}

\begin{proposition} \label{Prop:null_seq}
	Let $(\varphi_n) \in  C_c(X)$ be a null-sequence for a critical $\Q$ with an Agmon ground state $ \psi $. Then, $(0\vee{\varphi}_n\wedge \psi)$ is also a null-sequence for $\Q$.
\end{proposition}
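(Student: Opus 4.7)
The proof is essentially a direct application of the cut-off Lemma~\ref{l:cuttoff} combined with elementary pointwise estimates, so I would keep it short. Set $\tilde\varphi_n := 0 \vee \varphi_n \wedge \psi$. Since $(\varphi_n)$ is a null-sequence, each $\varphi_n$ is nonnegative and compactly supported, and the Agmon ground state $\psi$ is strictly positive (by connectedness and the Harnack inequality cited after the Agmon--Allegretto--Piepenbrink discussion). Therefore $\tilde\varphi_n = \varphi_n \wedge \psi$ is a nonnegative function lying in $C_c(X)$, because it vanishes wherever $\varphi_n$ does.

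The main step is to show $\Q(\tilde\varphi_n)\to 0$. Since $\psi$ is an Agmon ground state, $\psi$ is in particular a strictly positive (super)solution of \eqref{e:Q'} and belongs to $\mathcal{F}(X)$. Hence Lemma~\ref{l:cuttoff} applies with $\sol{u} = \psi$, yielding a constant $C>0$ such that
\begin{equation*}
    \Q(\tilde\varphi_n) \;=\; \Q\bigl(0\vee\varphi_n\wedge\psi\bigr) \;\leq\; C\,\Q(\varphi_n).
\end{equation*}
Because $\Q(\varphi_n)\to 0$ by assumption, we conclude $\Q(\tilde\varphi_n)\to 0$.

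It remains to verify the pointwise normalization at some reference point $o\in X$. By definition of null-sequence we have $C^{-1}\le\varphi_n(o)\le C$ for some constant $C>0$. Then
\begin{equation*}
    \tilde\varphi_n(o) \;=\; \varphi_n(o)\wedge\psi(o) \;\leq\; C,
\end{equation*}
and, distinguishing whether $\varphi_n(o)\leq\psi(o)$ or not,
\begin{equation*}
    \tilde\varphi_n(o) \;\geq\; \min\bigl(\varphi_n(o),\psi(o)\bigr) \;\geq\; \min\bigl(C^{-1},\psi(o)\bigr) \;>\; 0.
\end{equation*}
Setting $C' := \max(C,\psi(o)^{-1})$, we obtain $(C')^{-1}\leq\tilde\varphi_n(o)\leq C'$ for all $n$, which completes the verification that $(\tilde\varphi_n)$ is a null-sequence.

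I do not anticipate any real obstacle here, since all the heavy lifting (compatibility of the simplified energy with the contraction $t\mapsto 0\vee t\wedge\psi$) is already encoded in Lemma~\ref{lem:contr} and packaged into Lemma~\ref{l:cuttoff}. The only minor care needed is to readjust the normalization constant from $C$ to $C'$ at the point $o$.
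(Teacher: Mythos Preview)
Your proof is correct and follows exactly the same approach as the paper, which simply says the result is immediate from Lemma~\ref{l:cuttoff}. You have merely spelled out the details the paper leaves implicit, namely that $\psi$ is a strictly positive supersolution so the lemma applies, and that the normalization constant at $o$ must be adjusted to $C'=\max(C,\psi(o)^{-1})$.
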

\begin{proof}
	This follows immediately from Lemma~\ref{l:cuttoff}.
\end{proof}

%%%%%%%%%%%%%%%%%%%%%%%%%%%%%%
\begin{proof}[Proof of Theorem~\ref{Thm:best_constant}] Denote $ \lambda_{0}=\lambda_{0}(g) $.
	Lemma~\ref{lem-spectral-gap} implies that  $Q-\lambda_{0}|g|$ is critical and let $\psi$ be a ground state satisfying $\psi(o)=1$ for some fixed $o\in X$. By the necessary condition Theorem~\ref{Thm:KP}, we have $ \psi\in \ell^{p}(X,|g|m) $. Furthermore, since $ \psi $ is a global minimal positive solution, we infer $ \psi \leq C u $ for some constant $ C $. By our assumption $  u\in \ell^{p}(X,c_{-})  $,  therefore, $$  \psi\in \ell^{p}(X,|g|m+c_{-}) . $$
	By Proposition~\ref{Prop:null_seq}, $ (\Q-{\lambda_0}|g|) $ admits a null-sequence $ (\psi_{n}) $ such that $ 0\le\psi_{n}\leq\psi  $. Moreover, as already discussed after the definition of null-sequences, $ \psi_{n} \to\psi$ pointwise by  \cite[Theorem~4.1]{Florian_nonlocal}. Thus,
	using $ \psi_{n}\leq \psi $
	\begin{align*}
	\Q_{+}(\psi_{n}) \leq  (\Q-{\lambda_0}|g|)({\psi}_{n})+\sum_{X}(m{\lambda_{0}}|g|+c_{-})|\psi|^{p}
	\end{align*}
 and as $ (\psi_{n}) $ is a $ (\Q-{\lambda_0}|g|) $-null-sequence, the above estimate also gives that $ \Q_{+}(\psi_{n}) $ is bounded. As $ \mathcal{D}_{0}(X) $ is a reflexive Banach space, we infer that up to a subsequence $ (\psi_{n}) $ converges weakly in $ \mathcal{D}_{0} (X) $ by the Alaoglu-Bourbaki theorem. By Proposition~\ref{Prop:weak_pointwise} we infer pointwise convergence and, hence, the weak limit agrees with $ \psi $ which we already discussed to be the pointwise limit. In particular, $ \psi \in \mathcal{D}_{0}(X) $. Furthermore,  from the estimate above, the fact that $(\psi_{n}) $ is a $ (\Q-{\lambda_0}|g|) $-null-sequence and Fatou's lemma, we obtain
 \begin{align*}
 	\Q_{+}(\psi) \leq  \sum_{X}(m{\lambda_{0}}|g|+c_{-})|\psi|^{p}
 \end{align*}
Reordering this inequality and combining it with the Hardy inequality for $ g $ which extends to $ \mathcal{D}_{0}(X) $ by Proposition~\ref{p:HardyOnD0}, we obtain
	\begin{align*}
		\lambda_{0}\sum_{X}m|g|{|\psi|^p}\leq 	\Q(\psi)\leq 	\lambda_{0}\sum_{X}m|g|{|\psi|^p}
	\end{align*}
which yields the statement.
\end{proof}
%%%%%%%%%%%%%%%

\section{Cheeger constant}\label{sec_Cheeger}

In this section we relate the  best Hardy constant to Cheeger constants. As discussed in the previous section the reciprocal of the best Hardy constant can be understood as a spectral quantity. Hence, it does not come as a surprise that such a spectral quantity can be estimated in terms of a Cheeger constant.

For a symmetric weight $ a:X\times X\to[0,\infty)  $ over $ (X,m) $, we define the corresponding \emph{Cheeger constant} by
\begin{align*}
	h({a,m}):=\inf_{W\Subset X} \frac{a(\partial W)}{m(W)}
\end{align*}
where  the \emph{boundary} $ \partial W $ of $ W $ is given by
\begin{align*}
	\partial W = W\times (X\setminus W) %\cup (X\setminus W)\times W.
\end{align*}
and {$ a(\partial W)=\sum_{\partial W}a $}.

Furthermore, for $ p\in (1,\infty) $ let $ q $ be its H\"older conjugate, i.e. $ 1/p+1/q=1 $. A  pseudo-metric $\gr$ is called $ p\, $-intrinsic for $ b $ over $ (X,m) $ if
\begin{align*}
	\frac{1}{m(x)}\sum_{y\in X}b(x,y)\rho(x,y)^{q}\leq 1, \quad x\in X.
\end{align*}
It is referred to as an \emph{intrinsic metric} in the case $ p=2 $, cf. \cite{Bauer,FLW,KLW}.

Below we will provide two theorems with corresponding  corollaries alluding to special cases where the best Hardy constant is estimated by the reciprocal of Cheeger constant.

The first case is concerned with general $ p \in {(1,\infty)}$ but restricted to vanishing potential. This case applies the results of  \cite{KellerMugnolo}. Secondly, to omit the assumption on vanishing of the potential, one can restrict oneself to $ p=2 $. This case applies the results of \cite{Bauer}.
%%%%%%%%%%%%
\begin{theorem}[$ p $-Cheeger estimate] Assume $ p\in (1,\infty) $ and $ c=0 $. Let $ g:X\to (0,\infty)$   and let  $ \rho  $ be $ p\, $-intrinsic for $ b $ over $ (X,|g|m) $. Then,
	\begin{align*}
		\frac{1}{h({b,m|g|})}\leq 	    \|g\|_{\H}\leq \frac{2^{1-p}p^{p}}{h({b\rho,m|g|})^{p}}\, ,
	\end{align*}
where the quotients are infinite if the denominator is zero.
\end{theorem}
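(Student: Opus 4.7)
The plan is to split into two independent inequalities: the lower bound on $\|g\|_{\H}$ is elementary, while the upper bound is a direct application of the $p$-Cheeger inequality of Keller and Mugnolo cited immediately before the statement.

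For the lower bound $\|g\|_{\H} \geq 1/h(b,m|g|)$, I would plug the characteristic functions $\varphi = 1_W$, $W \Subset X$, into the Hardy inequality $C(g) \sum_X m|g| |\varphi|^p \leq \Q(\varphi)$. Since $c = 0$ and $|\nabla_{xy} 1_W|^p \in \{0,1\}$, a direct computation gives $\Q(1_W) = b(\partial W)$ and $\sum_X m|g|\,|1_W|^p = (m|g|)(W)$, so $C(g) \leq b(\partial W)/(m|g|)(W)$. Taking the infimum over $W \Subset X$ yields $C(g) \leq h(b,m|g|)$, equivalently $\|g\|_{\H} \geq 1/h(b,m|g|)$ (with the convention that the right-hand side is $+\infty$ when $h(b,m|g|) = 0$).

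For the upper bound $\|g\|_{\H} \leq 2^{1-p} p^p / h(b\rho,m|g|)^p$, I would invoke the $p$-Cheeger inequality of \cite{KellerMugnolo} applied with edge weight $b$, vertex measure $m|g|$, and $p$-intrinsic pseudo-metric $\rho$. Since $c = 0$, the functional $\Q$ coincides with the bare $p$-Dirichlet energy of $b$, and the cited inequality produces
\[
\frac{2^{p-1}}{p^p}\, h(b\rho, m|g|)^p \sum_X m|g|\, |\varphi|^p \;\leq\; \Q(\varphi), \qquad \varphi \in C_c(X).
\]
This is precisely $C(g) \geq 2^{p-1} h(b\rho, m|g|)^p / p^p$, which rearranges to the claimed bound.

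The main obstacle is making sure the Keller--Mugnolo inequality is applied with the correct identifications: the vertex measure must be the twisted one $m|g|$ (so that the lower bound on the $p$-energy matches the $\ell^p(X,m|g|)$ norm appearing in the Hardy inequality), the $p$-intrinsic hypothesis on $\rho$ must be read relative to this twisted measure (which is exactly the assumption of the theorem), and the boundary weight entering the Cheeger constant is the edge-wise product $b\rho$ rather than $b$. Once these identifications are in place, no additional analytic step is needed beyond the elementary test-function computation of the lower bound.
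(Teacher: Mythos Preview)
Your proposal is correct and essentially matches the paper's proof. For the upper bound both you and the paper invoke the $p$-Cheeger inequality from \cite{KellerMugnolo} with the twisted measure $m|g|$; for the lower bound the paper simply cites \cite[Theorem~3.9]{KellerMugnolo} (applied with the combinatorial distance), whereas you spell out the underlying test-function computation $\varphi=1_W$, which is precisely the content of that citation.
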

\begin{proof}
	The upper bound follows directly from the definition of $  \|g\|_{\H} $ and \cite[Theorem~3.1]{KellerMugnolo}. The lower bound then follows from  \cite[Theorem~3.9.]{KellerMugnolo} employed with respect to the combinatorial graph distance.
\end{proof}

For a function $ g:X\to (0,\infty) $, denote
\begin{align*}
	D:=	\sup_{x\in X}	\frac{1}{m(x)|g|(x)}\sum_{y\in X}b(x,y)
\end{align*}
which takes values in $(0,\infty]$. We obtain  the following corollary if $ D $ is bounded.

\begin{corollary}\label{c:Cheeger_p} Assume $ p\in (1,\infty) $ and $ c=0 $. For all $ g:X\to (0,\infty) $  such that $ D<\infty $, we have
	\begin{align*}
		\frac{1}{h({b,m|g|})}\leq 	    \|g\|_{\H}\leq \frac{2^{1-p}p^{p}D^{p/q}}{h({b,m|g|})^{p}}.
	\end{align*}
\end{corollary}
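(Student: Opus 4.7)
The plan is to derive the corollary as an immediate consequence of the preceding $p$-Cheeger estimate theorem by choosing a constant pseudo-metric $\rho$ tuned to $D$. The lower bound $1/h(b,m|g|)\le \|g\|_{\H}$ is already contained in the theorem and requires no choice of $\rho$, so the only task is the upper bound.

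Define $\rho\colon X\times X\to[0,\infty)$ by
\begin{align*}
\rho(x,y):=\begin{cases} D^{-1/q} & x\neq y, \\ 0 & x=y. \end{cases}
\end{align*}
First I would check that $\rho$ is a pseudo-metric: symmetry is obvious, and the triangle inequality $\rho(x,z)\le\rho(x,y)+\rho(y,z)$ holds trivially since $D^{-1/q}\le 2D^{-1/q}$. Next, to see that $\rho$ is $p$-intrinsic for $b$ over $(X,|g|m)$, I compute, for every $x\in X$,
\begin{align*}
\frac{1}{m(x)|g|(x)}\sum_{y\in X}b(x,y)\rho(x,y)^{q}=\frac{D^{-1}}{m(x)|g|(x)}\sum_{y\in X}b(x,y)\le \frac{D^{-1}\cdot D}{1}=1,
\end{align*}
where the inequality uses the definition of $D$ as the supremum of the right-hand expression without the $D^{-1}$ factor.

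Now I would compute the Cheeger constant of $b\rho$ over $(X,|g|m)$. For any $W\Subset X$, since $\rho$ takes the constant value $D^{-1/q}$ on the off-diagonal and the boundary $\partial W=W\times(X\setminus W)$ contains no diagonal pairs, we get
\begin{align*}
(b\rho)(\partial W)=\sum_{(x,y)\in \partial W} b(x,y)\rho(x,y)=D^{-1/q}\, b(\partial W),
\end{align*}
whence $h(b\rho,m|g|)=D^{-1/q}\,h(b,m|g|)$. Substituting this into the upper bound of the preceding theorem yields
\begin{align*}
\|g\|_{\H}\le \frac{2^{1-p}p^{p}}{h(b\rho,m|g|)^{p}}=\frac{2^{1-p}p^{p}D^{p/q}}{h(b,m|g|)^{p}},
\end{align*}
which is exactly the claimed upper bound.

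There is no real obstacle; the only mildly subtle point is making sure the constant pseudo-metric satisfies the $p$-intrinsic inequality sharply enough to capture the factor $D^{p/q}$, and here the choice $\rho\equiv D^{-1/q}$ off the diagonal is optimal. The conventions that $1/0=\infty$ and $D^{p/q}=\infty$ when $D=\infty$ (in which case the upper bound is vacuous) take care of the boundary cases.
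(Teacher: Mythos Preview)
Your proof is correct and essentially the same as the paper's: the paper chooses $\rho=d/D^{1/q}$ with $d$ the combinatorial graph distance, while you take the constant off-diagonal pseudo-metric $D^{-1/q}$, but since $b$ vanishes on non-adjacent pairs these two choices of $\rho$ yield the identical weight $b\rho$ and the identical verification of the $p$-intrinsic condition. The remaining computation $h(b\rho,m|g|)=D^{-1/q}h(b,m|g|)$ and the substitution into the theorem are the same in both.
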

\begin{proof}
Observe that the combinatorial graph distance $ d $ divided by $ D^{1/q} $ is a $ p $-intrinsic metric. Moreover, 
$ h({bd/D^{1/q},m|g|})^{p} =  h({b,m|g|})^{p}D^{-p/q} $ since $ bd=b $. Thus, the statement follows directly from the theorem above.
\end{proof}

We now turn the case of arbitrary potentials $ c $ satisfying   $ \Q_{2,b,c}\ge 0 $ on $ C_{c}(X) $.

\begin{theorem}[Cheeger estimate for ground state transform] Let $ p=2 $, let  $ u $ be a strictly positive harmonic function and $ g:X\to (0,\infty) $.
Then, for an intrinsic metric $ \rho  $ for $ b_{u} =b (u\otimes u) $ over $ (X,m|g|u^{2}) $, i.e.
\begin{align*}
	\frac{1}{m(x)|g(x)|}\sum_{y\in X}\frac{u(y)}{u(x)}b(x,y)\rho(x,y)^{2}\leq 1, \quad x\in X,
\end{align*}
we have
\begin{align*}
	\frac{1}{h({b_{u},m|g|u^{2}})}\leq 	    \|g\|_{\H}\leq \frac{2}{h({b_{u}\rho,m|g|u^{2}})^{2}}.
\end{align*}	
\end{theorem}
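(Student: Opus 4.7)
The plan is to exploit the fact that in the linear case $p=2$, the ground state transform is an exact identity, not just the two-sided bound of Proposition~\ref{Prop:simp_energy}. After the transform, $\|g\|_\H^{-1}$ becomes literally the bottom of the spectrum of a Dirichlet form \emph{without potential} on the transformed graph $b_u$ over the measure space $(X, m|g|u^{2})$; the two stated inequalities are then, respectively, the test-function upper bound for $\lambda_0$ with characteristic functions of finite sets, and the Cheeger inequality for intrinsic metrics of \cite{Bauer}.

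First, I would establish by direct expansion of $|u(x)\psi(x)-u(y)\psi(y)|^{2}$ via the discrete Leibniz rule the exact algebraic identity
\begin{align*}
\Q(u\psi) \;=\; \frac{1}{2}\sum_{X\times X} b_u\,|\nabla\psi|^{2} \;+\; \sum_X m\,u\,\QQ[u]\,|\psi|^{2}, \qquad \psi\in C_c(X).
\end{align*}
Since $\QQ[u]=0$ by hypothesis, the potential term vanishes, and since $u>0$ the map $\psi\mapsto u\psi$ is a bijection on $C_c(X)$. Consequently
\begin{align*}
\|g\|_\H \;=\; \sup_{\psi\in C_c(X),\,\psi\neq 0}\frac{\sum_X m|g|u^{2}|\psi|^{2}}{\tfrac{1}{2}\sum_{X\times X} b_u\, |\nabla\psi|^{2}} \;=\; \frac{1}{\lambda_0},
\end{align*}
where $\lambda_0$ is the bottom of the $\ell^{2}(X,m|g|u^{2})$-spectrum of the Laplacian associated with the edge weights $b_u$ and zero potential.

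For the lower bound on $\|g\|_\H$, I would plug $\psi=1_W$ with $W\Subset X$ into the variational characterization of $\lambda_0$; since $\tfrac{1}{2}\sum b_u|\nabla 1_W|^{2}=b_u(\partial W)$ and $\sum_W m|g|u^{2}=(m|g|u^{2})(W)$, optimizing over $W$ gives $\lambda_0\le h(b_u,m|g|u^{2})$ immediately. For the upper bound, I would first check that the hypothesis on $\rho$ is exactly the intrinsic metric condition for $b_u$ over $(X,m|g|u^{2})$: multiplying numerator and denominator by $u(x)$ and using $b_u=b(u\otimes u)$ rewrites
\begin{align*}
	\frac{1}{m(x)|g(x)|}\sum_{y}\frac{u(y)}{u(x)}b(x,y)\rho(x,y)^{2}\le 1 \quad\text{as}\quad \frac{1}{(m|g|u^{2})(x)}\sum_{y} b_u(x,y)\rho(x,y)^{2}\le 1.
\end{align*}
The intrinsic-metric Cheeger inequality of \cite{Bauer} applied to the Laplacian on $b_u$ over $(X,m|g|u^{2})$ then delivers $\lambda_0\ge \tfrac{1}{2}\,h(b_u\rho,m|g|u^{2})^{2}$, which is the claimed bound after inversion.

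I do not anticipate any genuine obstacle: the only substantive use of the harmonicity of $u$ is to annihilate the potential term in the ground state transform, which reduces the problem to a pure Laplacian Cheeger estimate on the graph $b_u$, so that \cite{Bauer} applies as a black box. The only items requiring actual verification are the two algebraic identities above, both of which are routine in the linear case.
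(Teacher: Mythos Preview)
Your proposal is correct and follows essentially the same route as the paper. The paper invokes the ground state transform from \cite[Proposition~4.8]{KPP} (which you re-derive by hand), then cites \cite[Theorem~3.1]{Bauer} for the Cheeger lower bound on $\lambda_0$ and \cite[Theorem~3.6]{Bauer} with the combinatorial distance for the upper bound on $\lambda_0$; your direct test with $\psi=1_W$ is exactly the content of the latter, so the arguments coincide.
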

\begin{proof}
	First observe that by the ground state transform \cite[Proposition 4.8 ]{KPP} (or \cite[Theorem~4.11]{KLW}) we have for all nontrivial $ \ph\in C_{c}(X) $
	\begin{align*}
		\frac{Q_{2,b,c}(\ph)}{\sum_{X}m|g|\ph^{2}} =
		\frac{Q_{2,b_{u}}(\ph/u)}{\sum_{X}mu^{2}|g|(\ph/u)^{2}} .
	\end{align*}
	Thus, the upper bound follows directly from the definition of $  \|g\|_{\H} $ and \cite[Theorem~3.1.]{Bauer}. The lower bound then follows from  \cite[Theorem~3.6.]{Bauer} employed with respect to the combinatorial graph distance.
\end{proof}

We say a function $ u:X\to (0,\infty) $  is of \emph{bounded oscillation} if
\begin{align*}
	U:=\sup_{x\sim y}\frac{u(y)}{u(x)}<\infty
\end{align*}

\begin{corollary}Let $ p=2 $, let  $ u $ be a strictly positive harmonic function  of bounded oscillation and $ g:X\to(0,\infty) $. Then, for  an intrinsic metric $ \rho  $ for $ b $ over $ (X,m|g|) $, we have
\begin{align*}
	\frac{1}{h({b_{u},m|g|u^{2}})}\leq 	    \|g\|_{\H}\leq \frac{2U}{h({b_{u}\rho,m|g|u^{2}})^{2}}.
\end{align*}	
\end{corollary}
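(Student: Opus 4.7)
The plan is to reduce this corollary directly to the preceding theorem (the ground state transform Cheeger estimate) by exhibiting a suitable intrinsic metric for $b_{u}$ over $(X,m|g|u^{2})$, constructed from the given intrinsic metric $\rho$ for $b$ over $(X,m|g|)$ by rescaling.

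First, I would observe that bounded oscillation lets us control the weight $u(y)/u(x)$ appearing in the intrinsic condition for $b_{u}$. Since $b(x,y)>0$ implies $x\sim y$, the hypothesis $U=\sup_{x\sim y}u(y)/u(x)<\infty$ gives, for every $x\in X$,
\begin{align*}
\frac{1}{m(x)|g(x)|}\sum_{y\in X}\frac{u(y)}{u(x)}b(x,y)\rho(x,y)^{2}
\leq \frac{U}{m(x)|g(x)|}\sum_{y\in X}b(x,y)\rho(x,y)^{2}\leq U,
\end{align*}
where the last inequality uses that $\rho$ is intrinsic for $b$ over $(X,m|g|)$. Dividing through by $U$, the rescaled pseudo-metric $\tilde\rho:=\rho/\sqrt{U}$ satisfies the intrinsic condition for $b_{u}$ over $(X,m|g|u^{2})$ as formulated in the preceding theorem.

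Next, I would apply that theorem with the metric $\tilde\rho$ in place of $\rho$. Its lower bound is independent of the intrinsic metric, giving immediately
\begin{align*}
\frac{1}{h(b_{u},m|g|u^{2})}\leq \|g\|_{\H}.
\end{align*}
For the upper bound, the theorem yields
\begin{align*}
\|g\|_{\H}\leq \frac{2}{h(b_{u}\tilde\rho,m|g|u^{2})^{2}}.
\end{align*}
By the obvious homogeneity of the Cheeger constant in its edge weight, $h(b_{u}\tilde\rho,m|g|u^{2})=U^{-1/2}\,h(b_{u}\rho,m|g|u^{2})$, so $h(b_{u}\tilde\rho,m|g|u^{2})^{2}=U^{-1}h(b_{u}\rho,m|g|u^{2})^{2}$, and substituting gives the claimed bound $\|g\|_{\H}\leq 2U/h(b_{u}\rho,m|g|u^{2})^{2}$.

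There is no real obstacle here: the argument is a two-line scaling reduction, and the only point requiring any care is that bounded oscillation is a hypothesis on nearest neighbours, so that one truly only needs to control $u(y)/u(x)$ on pairs with $b(x,y)>0$; this is exactly what allows the factor $U$ to be pulled out of the sum. The deeper analytic content (relating the energy to the Cheeger quantity via co-area and the ground state transform) is already encapsulated in the previous theorem.
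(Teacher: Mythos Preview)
Your proof is correct and follows exactly the paper's approach: the paper's proof is the one-sentence observation that $\rho/\sqrt{U}$ is intrinsic for $b_{u}$ over $(X,m|g|u^{2})$ and then an appeal to the preceding theorem, which is precisely what you carry out in detail. Your additional remarks on the bounded-oscillation hypothesis and the homogeneity of the Cheeger constant simply make explicit what the paper leaves to the reader.
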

\begin{proof}
From the condition on an intrinsic metric it is clear, that an intrinsic metric for $ b $ over $ (X,m|g|) $ divided by $ U^{1/2} $  is intrinsic for $ b_{u} $ over $ (X,m|g|u^{2})  $. Thus, the statement follows from the theorem above.
\end{proof}

\begin{corollary}Let $ p=2 $, let  $ u $ be a strictly positive harmonic function  of bounded oscillation, $ g:X\to(0,\infty) $ and $ D<\infty $. Then,  have
	\begin{align*}
		\frac{1}{h({b_{u},m|g|u^{2}})}\leq 	    \|g\|_{\H}\leq \frac{2UD}{h({b_{u},m|g|u^{2}})^{2}}.
	\end{align*}	
\end{corollary}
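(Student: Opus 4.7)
The plan is to emulate the passage from the previous $p$-Cheeger theorem to Corollary \ref{c:Cheeger_p}: take the combinatorial graph distance $d$, rescale it, verify it is intrinsic (with the appropriate weights), and observe that the rescaled distance factors cleanly out of the Cheeger constant because $d$ is identically $1$ on the support of $b$.

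Concretely, I would start by setting $\rho:=d/D^{1/2}$, where $d$ is the combinatorial graph distance on $X$. Since $d(x,y)=1$ whenever $b(x,y)>0$, I would check the intrinsic condition for $b$ over $(X,m|g|)$ directly:
\begin{align*}
\frac{1}{m(x)|g(x)|}\sum_{y\in X} b(x,y)\rho(x,y)^{2}
&= \frac{1}{D}\cdot\frac{1}{m(x)|g(x)|}\sum_{y\in X} b(x,y)d(x,y)^{2} \\
&= \frac{1}{D}\cdot\frac{1}{m(x)|g(x)|}\sum_{y\in X} b(x,y) \leq \frac{1}{D}\cdot D=1
\end{align*}
by the definition of $D$. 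Thus $\rho$ is indeed an intrinsic metric for $b$ over $(X,m|g|)$, so it is an admissible choice in the previous corollary.

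Next I would observe that since $b_u=b\,(u\otimes u)$ shares the same edge support as $b$, we again have $b_u(x,y)\rho(x,y)=b_u(x,y)/D^{1/2}$ for all $x,y$, hence
\[
h(b_u\rho, m|g|u^{2}) \;=\; \frac{1}{D^{1/2}}\,h(b_u, m|g|u^{2}).
\]
Inserting this into the upper bound from the preceding corollary gives
\[
\|g\|_{\H} \leq \frac{2U}{h(b_u\rho, m|g|u^{2})^{2}} = \frac{2U\cdot D}{h(b_u, m|g|u^{2})^{2}},
\]
while the lower bound is unchanged. This completes the proof; the only mildly nontrivial point is verifying that $d/D^{1/2}$ is intrinsic, and that is immediate from the definition of $D$ and the fact that $d\equiv 1$ on edges. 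There is no substantive obstacle here—the argument is a direct analogue of Corollary \ref{c:Cheeger_p} transferred to the ground state transform setting.
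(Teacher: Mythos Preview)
Your proof is correct and follows exactly the same route as the paper: take $\rho=d/D^{1/2}$ with $d$ the combinatorial graph distance, verify it is intrinsic for $b$ over $(X,m|g|)$, apply the preceding corollary, and use $b_u d=b_u$ to factor $D^{-1/2}$ out of the Cheeger constant. The paper's proof is simply a two-sentence pointer to this computation, whereas you spell it out explicitly.
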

\begin{proof}
Let $d$ be the combinatorial graph distance. As discussed in the proof of Corollary~\ref{c:Cheeger_p}, $ d/D^{1/2} $ is intrinsic for $ b $ over $ (X,m|g|) $. Hence, the statement follows from the corollary above.
\end{proof}

%%%%%%%%%%%%%%%%%%

%%%%%%%%%%%%%%%
\begin{center}
	{\bf Acknowledgments}
\end{center}
U.D. and Y.P.  acknowledge the support of the Israel Science Foundation (grant 637/19) founded by the
Israel Academy of Sciences and Humanities. U.D. is also supported in part by a fellowship from the Lady Davis Foundation. M.K. is grateful for the generous hospitality at the Technion and the financial support by the Swiss Fellowship during the time this research was conducted. Additionally, {the authors acknowledge} the support of the DFG supporting a stay of U.D. at the University of Potsdam and M.K. at the Technion.
%%%%%%%%%%%%%%%%%%%%%

%%%%%%%%%%%%%%%%%%%%%%%%
%\bibliography{Ref}
%\bibliographystyle{abbrv}
\end{document}